\providecommand{\norm}[1]{\left\lVert#1\right\rVert}
\newcommand{\weak}{\rightharpoonup}
\newcommand{\be}{\begin{equation}}
\newcommand{\ee}{\end{equation}}
\newcommand{\ba}{\begin{eqnarray}}
\newcommand{\ea}{\end{eqnarray}}
\newcommand{\beq}{\begin{equation}}
\newcommand{\eeq}{\end{equation}}
\renewcommand{\leq}{\leqslant}
\renewcommand{\le}{\leqslant}
\renewcommand{\geq}{\geqslant}
\renewcommand{\ge}{\geqslant}
\def \R {\mathbb{R}}
\def \N {\mathbb{N}}
\def\beq{\begin{equation}}
\def\eeq{\end{equation}}
\def\ba{\begin{array}}
\def\ea{\end{array}}
\numberwithin{equation}{section}
\newtheorem{rem}{Remark}[section]
\title{Convergence of Machine Learning Methods for Feedback Control Laws: Averaged Feedback Learning Scheme and Data Driven Methods
}
\author{Karl Kunisch\thanks{Radon Institute for Computational and Applied Mathematics, Austrian Academy of Sciences\\ and Institute of Mathematics and Scientific Computing, University of Graz (\email{karl.kunisch@uni-graz.at}).}
\and Donato Vásquez-Varas\thanks{Radon Institute for Computational and Applied Mathematics, Austrian Academy of Sciences,
  (\email{donato.vasquez-varas@ricam.oeaw.ac.at}.}}
\newcommand*{\addFileDependency}[1]{
  \typeout{(#1)}
  \@addtofilelist{#1}
  \IfFileExists{#1}{}{\typeout{No file #1.}}
}
\begin{document}

\maketitle

\begin{abstract}
This work addresses the synthesis of optimal feedback control laws via machine learning. In particular, the \emph{Averaged Feedback Learning Scheme} (AFLS) and a data driven methods are considered.  Hypotheses for each method ensuring the convergence of the evaluation of the objective function of the underlying control problem at the obtained feedback-laws towards the optimal value function are provided. These hypotheses are connected to the regularity of the value function and the stability of the dynamics. In the case of AFLS these hypotheses only require  Hölder continuity of the value function, whereas for the data driven method the value function must be at least $C^2$. It is demonstrated that these methods are connected via their optimality conditions. Additionally, numerical experiments are provided by applying both methods to a family control problems, parameterized by a positive real number which controls the regularity of the value function. For small parameters the value function is smooth and in contrast for  large parameters it is non-differentiable, but semi-concave. The results of the experiments indicate that both methods have a similar performance for the case that the value function is smooth. On the other hand, if the value function is not differentiable, AFLS has a better performance which is consistent with the obtained convergence results.
\end{abstract}

\begin{keywords}
  optimal feedback control, Hamilton-Jacobi-Bellman equation, learning approach,  learning theory, non-linear system, non-smooth value function.
\end{keywords}

\begin{AMS}
49L12,   
49J15,   
49N35, 
68Q32,   	
93B52,   	
\end{AMS}

\section{Introduction}
The problem of constructing feedback-laws via machine learning methods has attracted notable attention. This is due to the fact that classical method for synthesizing feedback laws suffer from the \emph{curse of dimensionality}. Among the well-known methods for solving the HJB equation we mention finite difference schemes \cite{Bonnans}, semi-Lagrangian schemes \cite{Falcone}, and policy iteration \cite{Alla, Beard, Puterman, Santos}.  The curse of dimensionality can  be elevated by machine learning methods, since they are capable of efficiently approximating high dimensional  functions. For instance we can mention the following contributions:  representation formulas \cite{Chow1,Chow2,Chow3,DarbonOsher}, approximating the HJB equation by neural networks \cite{Han,Darbon,Nusken,Onken,Ito,KW1,Ruthotto,chen2023deep,zhao2023offline}, data driven approaches \cite{Nakamura1,Nakamura2,AzKaKK,Kang,Albi,ehring}, max-plus methods \cite{Akian,Gaubert,Dower}, polynomial approximation \cite{Kalise1,Kalise2}, tensor decomposition
methods \cite{Horowitz,Stefansson,Gorodetsky,Dolgov,Oster,Oster2}, POD methods \cite{Alla2,KunischVolk}, tree structure algorithms \cite{Alla1}, and sparse grids techniques\cite{BokanowskiGarckeGriebelPo, Garcke, KangWilcox,BokaWarinProst}, see also the proceedings volume \cite{KaliseKuRa}. Frequently, there is no  proof concerning the question whether the feedback law constructed via machine learning approximates an optimal control, unless the value function is sufficiently smooth  as is the case in \cite{ehring}, where the value functions is supposed to be an element of a reproducing kernel Hilbert space, where an interpolation method is used, or as in \cite{KW1} and \cite{KuVaWal} where the convergence of the method is proved for control problems with $C^2$ value functions.

In the present work  we analyze and study the convergence of two approaches and investigate  the interconnections between them. The first one consists in finding a feedback law minimizing the average of the cost of the control problem with respect to a set of initial conditions. We call this method \emph{Averaged Feedback Learning Scheme} (AFLS). It has been introduced and used in earlier work, see e.g. \cite{KW1,KuVa,KuVa,KuVa2, Onken, Onken2, Ruthotto}, but has not been attributed a name before. In the second approach the feedback  law is obtained by using a regression or interpolation algorithm. We consider two versions of it.  In all cases  the feedback control is parameterized by means of the verification theorem.

The convergence of the methods is investigated  in terms of the distance between the evaluation of the objective function  obtained by the feedback law of the respective method  and the value function of the underlying optimal control problem.
This is  explained in more detail this in Section \ref{sec:Methods}.
We are able to prove the convergence of the AFLS approach under the hypothesis of the existence of a stable sequence of consistent feedback laws, see \cite{KuVa2}.
 This in turn depends on the regularity of the value function. For the data driven approach  we are able to prove its convergence if the value function is at least $C^2$. In order to illustrate the dependence of the convergence in dependence of the regularity  of the value function, we also study numerically  the convergence of the different methods for a family of control problem indexed by a parameter $\gamma>0$. This problem was first given in \cite{KuVa2} where it was proved that its value function is semi-concave for all $\gamma>0$ and is non-differentiable for $\gamma$ large enough. Additionally, in the present article we prove that for $\gamma$ small enough the value function is $C^{\infty}(\R^d)$. Thus applying the methods for this problem for different values of $\gamma$ allows us to observe the influence of the regularity on the behavior of the  proposed methods.

Let us briefly outline the structure of the paper. The problem statement is given in Section 2. Section 3 contains the description of the methods for its solution,  and of the hypotheses needed for the convergence analysis, which is presented in Section 4. Section 5 is devoted to the description of the test problem. It depends on a parameter regulating the regularity of the associated value function. Numerical results are presented in Section 6.  The Appendix contains some technical results.
\section{Control problem}

We consider the task of finding an approximate optimal feedback-law for the following control problem
\begin{equation}
    \min_{u\in L^{2}((0,\infty);\R^m)}\int_{0}^{\infty}\ell(y(t;y_0,u))+\frac{\beta}{2}|u|^2 dt,
    \label{ControlProblem}
\end{equation}
where $y(\cdot ;y_0,u)$ is the solution of the equation
\beq y'=f(y)+Bu,\ y(0)=y_0. \label{ControlProblem:ODE}\eeq
Throughout we assue that $B\in \mathbb{R}^{d\times m}$, that $f$ and $\ell$ are  Lipschitz continuous on bounded subsets of $\mathbb{R}^d$, and $\ell\ge 0$.

A classical way to find   a feedback-law is based on dynamic programming. For this purpose the  value function associated to \eqref{ControlProblem} - \eqref{ControlProblem:ODE} is utilized. It is defined by
\beq V(y_0)= \min_{u\in L^{2}((0,\infty;\R^d))}\int_{0}^{\infty}\left(\ell(y(t;y_0,u))+\frac{\beta}{2}|u(t)|^2 \right)dt\label{def:ValueFunction}.\eeq
In the case that $V$ is $C^{1,1}(\R^d)$ it is known that the feedback law  given by
$$ u(y)=-\frac{1}{\beta}B^{\top}\nabla V(y)\mbox{ for }y\in\R^d$$
is optimal, i.e., for $y_0\in \R^d$, denoting by $y^{*}\in H^{1}_{loc}((0,\infty);\R^{d})$ the unique solution of
$$y'(t)=f(y(t))+B u(y(t)),\ t>0,\ y(0)=y_0$$
and setting $u^{*}(t)=u(y^{*}(t))$ for $t>0$, we have that $u^{*}$ is an optimal solution of \eqref{ControlProblem}. Further, in this case $V$ is a classical solution of the following Hamilton-Jacobi equation
\beq \max_{u\in \R^{m}}\left\{- \nabla V(y)(f(y)+Bu) -\ell(y)-\frac{\beta}{2}|u|^2\right\}=0\mbox{ for all }y\in \R^{d}.\label{HJBeq}\eeq

Hence, for finding an  optimal feedback law it is enough to
solve \eqref{HJBeq}.  But there are two main obstacles for using  this approach directly. The first one relates to the fact that  the value functions is frequently  merely locally Lipschitz continuous, and the second one is that solving \eqref{HJBeq} suffers from the curse of dimensionality. As was mentioned in the introduction, for this reason many authors have tried to overcome this difficulty by using machine learning techniques.

Problem \eqref{ControlProblem}-\eqref{ControlProblem:ODE} is posed over the infinite time horizon, and consequently special attention has to be paid to the well-posedness of this problem. 
Above we have not made a concrete choice for the running cost $\ell$. If $\ell(y) = \frac{1}{2} |y|^2$, then \eqref{ControlProblem}-\eqref{ControlProblem:ODE} relates to optimal stabilization and appropriate stabilizability assumptions are required. Alternatively we can think of a running cost 
containing a discount factor.  These aspects have received much attention in the control literature and we shall not focus on them here. Rather we make assumptions which guarantee that \eqref{ControlProblem}-\eqref{ControlProblem:ODE} admits a control $u$ in feedback form such that the cost in   \eqref{ControlProblem} is finite for all initial conditions $y_0$  in a set $\omega \subset \R^d$. 

\section{Methods for the construction of approximating   optimal feedback laws}
\label{sec:Methods}
For enunciating precisely our goal and main results, we need first some definitions and hypotheses, which were previously  introduced in \cite{KuVa2,KuVaWal,KuVa}. Throughout let $\Omega$ be an open and bounded subset of $\R^{d}$ and let $\omega\subset \R^d$ be another open set with its closure  contained in $\Omega$.
\begin{defi}
\label{defi:Cal}
 For $u\in  W^{1,\infty}_{loc}(\Omega;\R^m)$, $y_0\in \Omega$, and $T\in (0,\infty]$ the function   $y(\cdot ;y_0,u)\in W^{1,\infty}(0,T;\R^d)$  is called the solution to \eqref{ControlProblem:ODE} on $[0,T]$ if it is the only function satisfying
\beq  y'(t)=f(y(t))+Bu(y(t)),\ y(0)=y_0, \ \text{ for } t\in [0,T]. \label{closedloop}\eeq
\end{defi}

Frequently the function $u$ in the above definition in the one associated to  the verification theorem. For this purpose we introduce the following notation.

\begin{defi}
\label{defi:feedback}
    For $v\in C^{1,1}(\Omega)$  the mapping $u_v\in W^{1,\infty}_{loc}(\Omega;\R^{d})$ is defined as
    \beq u_{v}(y)=-\frac{1}{\beta}B^{\top}\nabla v(y)\mbox{ for } y\in \Omega.\eeq
\end{defi}

If $u_v$ is evaluated along a trajectory $y(t)$ of \eqref{ControlProblem:ODE} we refer to it as feedback-law. For $\delta>0$ we further introduce 
\beq \Omega_{\delta}:=\{x\in\Omega: \min_{z\in\partial\Omega}|x-z|>\delta\}.\label{OmegaDelta}\eeq.

\begin{hypo}
\label{hypo:stab}
For the tuple $(T,u,\delta,y_0)$ with $T>0$,  $u\in W^{1,\infty}_{loc}(\Omega;\R^m)$, $\delta>0$, and $y_0\in\Omega_\delta$, the solution $y(\cdot;y_0,u)$ to \eqref{ControlProblem:ODE} exists  on $[0,T]$ and $y(t;y_0,u) \in \Omega_\delta$ for all $t \in [0,T]$.
\end{hypo}

If \Cref{hypo:stab} is assumed for all $T>0$, then it amounts to a stability assumption for \eqref{ControlProblem:ODE} under the control $u$. Under a global Lipschitz condition on $f$ and assumptions on $V$ this assumption was addressed in \cite[Proposition 3.5]{KW1}. It can also be established by Ljapunov function techniques. For this purpose we consider the alternative   \Cref{hypo:Lyapunov} below and in the subsequent remark we discuss that it implies  \Cref{hypo:stab}.

\begin{hypo}\label{hypo:Lyapunov}
 There exist $\tilde \delta>0$, and $w\in C^{1}(\Omega)$ such that for
 $$\omega_{\tilde \delta}:=\{y\in \Omega: w(y) < \sup_{y_0\in \omega} w(y_0)+\tilde\delta\},$$
we have that $ \omega \subset \omega_{\tilde\delta}$,  $\overline{\omega_{\tilde\delta}} \subset \Omega$, and  $\partial \omega_{\tilde\delta}$ is of class $C^1$.
Moreover  $\phi\in C(\Omega)$ is a viscosity super solution of
$$ -\nabla w(y)^{\top}(f(y)+B(y)u_\phi(y))=0 \mbox{ in }\omega_{\tilde\delta},$$
i.e. for every $\bar{y}\in\omega_{\tilde\delta}$ and every $h\in C_{loc}^{1}(\Omega)$ such that $\phi-h$ attains a local minimum at $\bar{y} $ the  following inequality holds
\beq\label{eq:Lap}
\nabla w(\bar{y})^{\top}(f(\bar{y})+B(\bar{y})u_h(\bar{y}))\leq  0.
\eeq
\end{hypo}

\begin{rem}
\label{rem:stab}
Let us consider  $\tilde\delta>0$, $w\in C^{1}$,  and $\phi\in C^{1,1}(\Omega)$ as in    \Cref{hypo:Lyapunov}. Under these conditions $w$ is a classical Ljapunov function, see e.g. \cite{Khalil}, and \eqref{eq:Lap} is satisfied with $u_h=u_\phi$. Consequently we obtain that the trajectories originating from $y_0\in \omega \subset \omega_{\tilde\delta}$ exist for all time $t\ge0$ and cannot escape from $\overline {\omega_{\tilde\delta}}$. Let us set $2 \delta$ as the distance of $\overline{\omega_{\tilde\delta}}$ to $\partial\Omega$, so that $\overline{\omega_{\tilde\delta}}\subset \Omega_\delta$.
Then for all $y_0\in\omega$ and all $T>0$  the tuple $(T,u_{\phi},\delta,y_0)$ satisfies \Cref{hypo:stab}, since ${\omega_{\tilde\delta}}$ is strictly included in $\Omega_\delta$.
\end{rem}

Next we introduce the functional $\mathcal{V}$ which evaluates the cost functional along the trajectory under the control $u$:
\begin{defi}
\label{defi:calV}
Let $(T,u,\delta,y_0)$ be a tuple with $T>0$, $u\in W^{1,\infty}_{loc}(\Omega;\R^m)$, $\delta>0$, and  $y_0\in\Omega_\delta$, and let \Cref{hypo:stab} hold. We define $\mathcal{V}_{T}(y_0,u)$ as
$$ \mathcal{V}_{T}(y_0,u)=\int_{0}^{T}\left(\ell(y(t;y_0,u))+\frac{\beta}{2}|u(y(t;y_0,u))|^2 \right)dt. $$
\end{defi}

Of course we always have $(T,u,\delta,y_0)\ge V(y_0)$, provided the two functionals are well-defined.

We study three methods with the aim of constructing  an approximately optimal feedback law. To explain them we consider a finite time horizon $T>0$, a finite dimensional Banach space $(\Theta,\norm{\cdot}_{\Theta})$, a continuous and coercive function $\mathcal{P}:\Theta\mapsto \R^+$, a penalty coefficient $\alpha>0$, and a function $v:\Theta\mapsto C^{2}(\Omega)$. We will name the tuple $S=((\Theta,\norm{\cdot}_{\Theta}),\mathcal{P},\alpha,v)$  the \textbf{setting} of the learning problems.
We define the methods for the finite time horizon $T$. Subsequently will shall use these methods for a sequence of horizons $T_n$ with $\lim_{n\to\infty}T_n=\infty$ to obtain an approximate feedback law for the infinite horizon problem \eqref{ControlProblem}-\eqref{ControlProblem:ODE}. Note that involving finite horizon problems is natural for numerical realisations. 

The function $v(\theta)$ will play the role of the approximation to the value function $V$ which will henceforth be assumed to exist at least as an element in $L^1(\Omega)$. 

The first method, which we refer to as Averaged Feedback Learning Scheme (AFLS), consists in minimizing the
average cost of \eqref{ControlProblem} plus a penalization term:

\begin{prob}
\label{Problem:LearningProblemOpt}
For a time horizon $T>0$ and a setting $S=((\Theta,\norm{\cdot}_{\Theta}),\mathcal{P},\alpha,v)$, we solve
\beq
\min_{\theta\in \overline{\mathcal{O}_{T,\Theta}}}\frac{1}{|\omega|}\int_{\omega}\mathcal{V}_{T}(y_0,u_{v(\theta)})dy_0+\alpha\mathcal{P}(\theta)\label{LearningProblemOpt}\eeq
where $$ \mathcal{O}_{T,\Theta}:=\{\theta\in\Theta: \exists \delta>0 \mbox{ such that } u_{v_{(\theta)}}\mbox{ satifies \Cref{hypo:stab}}\mbox{ for all }y_0\in\omega\}.$$

\end{prob}

The second method consist in finding $\theta\in \Theta$ such that it minimizes the $L^{2}$ distance with respect to the optimal feedback controls along the trajectories originating from $\omega$:
\begin{prob}
\label{Problem:MachLearningProblem1}
For $V\in C^{1}(\Omega)$, a given time horizon $T>0$, and a setting $S=((\Theta,\norm{\cdot}_{\Theta}),\mathcal{P},\alpha,v)$ we solve
\beq
\min_{\theta\in {\Theta}}\frac{1}{2\beta|\omega|}\int_{\omega}\int_{0}^{T}|B^{\top}(\nabla v(\theta)(y^*(t;y_0))-\nabla V(y^*(t;y_0)))|^2dtdy_0+\alpha\mathcal{P}(\theta),\label{LearningProblemReg}
\eeq
where $y^{*}\in W^{1,\infty}((0,T)\times \omega;\R^d)$ is such that for almost all $y_0\in \omega$, the control given by $u(t)=-\frac{1}{\beta}B^\top\nabla V(y^*(y_0,t))$ is an optimal solution of \eqref{ControlProblem}.
\end{prob}

In a similar way, we consider the problem of minimizing the optimal feedback controls over the whole domain $\Omega$ instead of doing it along the trajectories. This leads to the following formulation:
\begin{prob}
\label{Problem:MachLearningProblem2}
For $V\in W^{1,2}(\Omega)$ and a given setting $S=((\Theta,\norm{\cdot}_{\Theta}),\mathcal{P},\alpha,v)$ we solve
\beq
\min_{\theta\in \Theta}\frac{1}{2\beta|\Omega|}\int_{{\Omega}}|B^{\top}\left(\nabla v(\theta)(y_0)-\nabla V(y_0)\right)|^2dy_0+\alpha \mathcal{P}(\theta).
\label{LearningProblemReg2}
\eeq
\end{prob}

Each of the methods is investigated for a sequence $S_n=((\Theta_n,\norm{\cdot}_{n}),\mathcal{P}_n,\alpha_n,v_n)$  of settings  replacing $S$ in the statements of these methods, with $\Theta_n$ finite dimensional, and a sequence of time horizons $T_n\to \infty$. Appropriate conditions will be established on  $S_n=((\Theta_n,\norm{\cdot}_{n}),\mathcal{P}_n,\alpha_n,v_n)$   to allow an asymptotic analysis as $n \to \infty$.

Observe that \Cref{Problem:LearningProblemOpt} and \Cref{Problem:MachLearningProblem1} when applied to a setting $S_n$ provide a solution   $v_n(\theta_n^*)$ which involves initial conditions in  $\omega$. But   $v_n(\theta_n^*)$ is also defined on all of  $\Omega$ and we hope that it provides  an approximation to the sought value function    also outside of $\omega$. In practice,  of course, initial conditions will not be chosen in all of $\omega$, but only at discrete sample points. These comments refer to the topic of {generalization} in the machine learning context.  For a specific case of a feedback control problems,  this was investigated in \cite[Section 8]{KuVaWal}.

To address the problem of approximating $v$ in Methods \ref{Problem:LearningProblemOpt}-\ref{Problem:MachLearningProblem2} by sequences of finite dimensional settings $S_n=((\Theta_n,\norm{\cdot}_{n}),\mathcal{P}_n,\alpha_n,v_n)$  we require additional hypotheses which are explained next. This will need some patience on the side of the reader.

Let us consider sequences of time horizons $T_n>0$ with $\lim_{n\to \infty}T_n=\infty$, of finite dimensional Bachach spaces $\{(\Theta_{n},\norm{\cdot}_{n}\}_{n\in \N}$,  coercive and continuous penalty operators $\mathcal{P}_{n}:\Theta_n\mapsto\R_+$,  functions  $v_n\in C^1(\Theta_n;C^2(\Omega) )$, and penalty parameters $\alpha_n>0$. Let $\theta_n^{*}\in \Theta_n$ be obtained by solving any of the three problems introduced above.  Our goal then consists in finding assumptions which imply that
\beq \lim_{n\to \infty}\norm{\mathcal{V}_{T_n}(\cdot,u_{v_n(\theta_n^*)})-V}_{L^{p}(\omega)}=0,\label{ConvergenceDef}\eeq
for $p\in [1,\infty]$. 
For this, we will use the following hypotheses:
\begin{hypo}
\label{hypo:approx:vf}
     We have $V\in L^{1}(\Omega)$  and there exist an increasing sequence $T_n$ of time horizons, a constant $\delta\in (0,dist(\omega,\partial\Omega))$, and a sequence $V_n\in C^{2}(\Omega)$ such that $\lim_{n\to \infty}T_{n}=\infty$,  for all $y_0\in \omega$  the tuple  $(T_n,u_{V_{n}},\delta,y_0)$ satisfies \Cref{hypo:stab}   with $T=T_n$, and
    \beq
    \label{hypo:approx:vf:lim}
    \limsup_{n\to \infty}\int_{\omega}(\mathcal V_{\hat{T}_n}(y_0,u_{V_{n}})-V(y_0))dy_0= 0.
    \eeq
\end{hypo}
Note that  \eqref{ConvergenceDef} involves the candidate for an approximately optimal  feedback $u_{v_n(\theta_n^*)}$ while in \eqref{hypo:approx:vf} the feedback $u_{V_{n}}$ involves the approximation $V_n$ of the value function, which is supposed to exist. 
Thus this hypothesis serves the purpose of guaranteeing that $V$ can in principle  be approximated by controls of the form  \eqref{defi:feedback} with $v=V_n$ and utilized in the evaluation of $\mathcal V_{\hat{T}_n}(y_0,u_{V_{n}})$. We shall discuss  the feasibility of  \eqref{hypo:approx:vf} in Remark \ref{remH3.9} right after \Cref{cor:convequivalence} in Section 4. For this purpose we shall require that $V$ is $\alpha-$Hölder continuous with $\alpha>\frac{1}{2}$.

The following hypothesis one will ensure the possibility to approximate these smooth functions  by finitely parameterized  functions, a property which is required for numerical realizations. 
\begin{hypo}
\label{hypo:approx:c2}
For each $g\in C^{1}(\Omega)$ there exist $\theta_n\in\Theta_n$ and a sequence of penalty coefficients $\alpha_n>0$  with $\lim_{n\to \infty} \alpha_n=0$ such that
    \beq \lim_{n\to\infty} v_n(\theta_n)=g\mbox{ in } C_{loc}^1(\Omega) \mbox{ and }\lim_{n\to\infty} \alpha_n\mathcal{P}_n(\theta_n)=0.\eeq
\end{hypo}

Hypotheses \ref{hypo:approx:vf} and \ref{hypo:approx:c2}, and a proper choice of the time horizons $T_{n}$ will imply the convergence of \Cref{Problem:LearningProblemOpt} in the sense of \eqref{ConvergenceDef}.

 A more restrictive version of \Cref{hypo:approx:c2}, requiring higher regularity of the value function,  will allow us to prove that Methods \ref{Problem:MachLearningProblem1} and \ref{Problem:MachLearningProblem2} converge in the same manner as  specified for  \Cref{Problem:LearningProblemOpt} above. We first state this hypothesis and in the following remark we provide an illustrating example. 
\begin{hypo}
\label{hypo:approx:restricc2}
There exists a Banach space $(\Theta,\norm{\cdot }_{\Theta})$   such that $V\in \Theta\cap C^{2}(\Omega)$, $\Theta$ is compactly embedded in $C^{1}(\overline{\Omega})$ and there exists $\delta>0$ such that for all $y_0\in\omega$ the tuple $(T,u_V, \delta,y_0)$ satisfies \Cref{hypo:stab} with $T=\infty$. Further, there exist sequences $\{(\Theta_n,\norm{\cdot }_{n})\}_{n\in\N}$ of finite dimensional Banach spaces, $v_n:\Theta_n\mapsto\Theta$ of continuous functions,  and $P_n:\Theta_n\mapsto\R_+$ of continuous functions, such that for all $g\in \Theta$ there exists a sequence $\theta_{n}\in\Theta_n$ satisfying
$$ \lim_{n\to\infty}\norm{v_n(\theta_n)-g}_{\Theta}=0,\ \sup_{n\in\N}\mathcal{P}_n(\theta_n)<\infty.$$
Moreover  there exist constants $C>0$ and $\sigma>0$ such that  it holds
$$  \norm{v_n(\theta)}_{\Theta}\leq  C\mathcal{P}_n(\theta)^\sigma, \text{ for all } n\in\N \text{ and }\theta\in \Theta_n.$$
\end{hypo}

\begin{rem}
\label{rem:example:approx}
 In order to illustrate the notation and give a situation where \Cref{hypo:approx:restricc2} holds, we consider the case of polynomials as ansatz functions. For the sake of simplicity we take $\Omega=(-1,1)^d$. For $k\in\N$ consider $H^{k}([-1,1])$ with the inner product $\langle\phi,\psi \rangle = \frac{1}{2}\sum_{i=0}^{k} \int_{-1}^{1}\phi^{(i)}(x)\psi^{(i)}(x)dx$ where $\phi^{i}$ stands for the $i$-th derivative of $\phi$. Let $\{\phi^{k}_{i}\}_{i=1}^{\infty}$ be a orthonormal basis of  $H^k((-1,1))$ obtained by applying the Gram-Schmidt process to the set of monomials $\{1,x,x^{2},\ldots\}$. We set $\Theta=H_{mix}^{k}((-1,1)^{d}):=\otimes_{i=1}^{d}H^k((-1,1))$, i.e, $\Theta$ is the tonsorial product of $d$ copies of $H^k((-1,1))$. The space $H_{mix}^{k}((-1,1)^{d})$ is Hilbert with the tensorial inner product. For a multi-index $\alpha\in\N^{d}$ we denote $\phi_{\alpha}(y):=\prod_{i=1}^{d}\phi_i(y_i)^{\alpha_i}$. Hence the set $\{\phi_{\alpha}\}_{\alpha\in\N^d}$ is a basis for $\Theta$. For $n\in\N$ we define $\Lambda_n\subset\N^d$ as the follows:
$$ \Lambda_n=\{\alpha=(\alpha_{1},\ldots,\alpha_d)\in \N^d: \max_{i=1,\ldots,d}\alpha_i \leq n\}.$$
Let us consider an enumeration of $\Lambda_n$ given by $\{\alpha^i\}_{i=1}^{|\Lambda_n|}$.
 We set $\Theta_n=\R^{|\Lambda_n|}$, $v_n(\theta)=\sum_{i=1}^{|\Lambda_n|}\theta_i\phi_{\alpha^i}$, $\norm{\theta}_n=|\theta|$,  and $\mathcal{P}_n(\theta)=\frac{1}{2}|\theta|^2$.  With these choices \Cref{hypo:approx:restricc2}  is met with $\sigma=\frac{1}{2}$ and $k\geq 3$, since $H_{min}^k((-1,1)^d)$ is continuously embedded in $W^{2,\infty}((-1,1)^d)$ (see \cite{SICKEL2009748}).

For high-dimensional problems (i.e. large $d$) the \textbf{hyperbolic-cross} index set is a more efficient choice than $\Lambda_n$. This set of multi-indexes is given  by
$$ \Gamma_n=\{\alpha=(\alpha_{1},\ldots,\alpha_d)\in N^d: \prod_{i=1}^{d}(\alpha_i+1)\leq n\}.$$

For $\Omega=\otimes_{i=1}^{d}(a_i,b_i)$ with $a_i<b_i$ for $i\in\{1,\ldots,d\}$ a change of variables is needed, that is, for $x\in\Omega$ we define $\tilde{\phi}_{\alpha}(x)=\prod_{i=1}^{d}\phi_{\alpha_i}\left(\frac{2}{(b_i-a_i)}\left(x_i-\frac{a_i+b_i}{2}\right)\right)$ and for $\theta\in\R^{|\Lambda_n|}$ we set $\tilde{v}_n(\theta)=\sum_{i=1}^{n}\theta_i\tilde{\phi}_{\alpha^i}$. With reference to \Cref{hypo:approx:restricc2}  we choose  $\Theta_n= \R^{|\Lambda_n|}$ and $\Theta = H^k_{mix} ((-1,1)^d)$ as above. Again the embedding of $\Theta$ into $C^1(\bar \Omega)$ is compact for $k \ge 3$.

In a more general perspective, \Cref{hypo:approx:restricc2} also encsompasses other interesting cases  such as  neural networks with $\Theta$ being the respective Barron space provided the activation function is smooth, or  reproducing kernel Hilbert spaces with a $C^2$ kernel and $\Theta$ as the corresponding native space. In such cases $\Theta$  can then be chosen as the space of limits  of functions of the form $v_n(\theta_n)$ for some $\theta_n\in\Theta_n$, endowed with the proper norm.
\end{rem}



\begin{rem}
\label{rem:reg}
Note that in contrast to  \Cref{Problem:LearningProblemOpt}, knowledge of the value function is required for \Cref{Problem:MachLearningProblem1} and \Cref{Problem:MachLearningProblem2}. In practice this can be realized by approximating the integrals by finite sums or by Monte Carlo methods with values for $\nabla V$ at sample points obtained by open loop control solves. This approach was investigated in \cite{AzKaKK} for instance. Recall the $\nabla V$ is related to the adjoint state associated to the control problem \eqref{closedloop}-\eqref{ControlProblem:ODE}, see \cite{AzKaKK}. It is therefore obtained  as a by-product in every numerical approach for open loop optimal control.  But, as already pointed out above, approximating the feedback over the smaller set $\omega$ by globally defined functions also serves the purpose to utilize the approximating feedback laws outside of $\omega$. This \emph{generalization step}, of course, requires additional analysis and will depend on the underlying dynamics characterized by $f$.

 Clearly \Cref{Problem:MachLearningProblem1} and \Cref{Problem:MachLearningProblem2} are more closely  related to each other,  than either of them to  \Cref{Problem:LearningProblemOpt}.  In practice data generation will be cheaper for \Cref{Problem:MachLearningProblem1}   than for \Cref{Problem:MachLearningProblem2} since for the former, information is gathered all along the trajectory,  and hence in  the context of sampling fewer data points (initial conditions) will be necessary for  \Cref{Problem:MachLearningProblem1} than for \Cref{Problem:MachLearningProblem2}.
Analytically \Cref{Problem:MachLearningProblem1} requires $C^1$-regularity $V$ and hence it required more regularity of the minimal value function  than is needed for \Cref{Problem:MachLearningProblem2}. 

\end{rem}

\begin{rem}
\label{rem:regressionproblems}
         Methods \ref{Problem:MachLearningProblem1} and \ref{Problem:MachLearningProblem2} minimize the $L^{2}$ distance with respect to the optimal feedback-law. While  the first one explicitly involves information along all of the optimal trajectories, the second one utilizes the whole domain $\Omega$.
         Next we provide an interesting relation between these two formulations.
         Namely, assuming that $\frac{1}{|det(D_{y_0}y^{*}(\cdot,\cdot))|}\in L^{\infty}((0,T)\times\omega)$,  setting $\Gamma_{T}=\{z\in\R^d: \exists (t,y_0)\in [0,T]\times\omega, y^{*}(t,y_0)=z\}$, and defining ${{y^0}}(t,z)$ for $(t,z)\in[0,T]\times\Gamma_T$ as the inverse function of $y^{*}$ with respect to its second argument, we obtain by means of the change of variables $z=y^{*}(t,y_0)$ that
     \begin{equation*}
     \begin{array}{l}
     \displaystyle\int_{\omega}\int_0^T |B^{\top}(\nabla v(\theta)(y^*(t,y_0))-\nabla V(y^*(t,y_0)))|^2dtdy_0\\[1.5ex]\displaystyle
    \qquad=\int_{\Gamma_{T}}|B^{\top}(\nabla v(\theta)(z)-\nabla V(z))|^2g(z)dz
      \end{array}
    \end{equation*}
    where $$ g(z)=\int_{\{t\in [0,T]:\ \exists y_0\in\omega,\ y^{*}(t;y_0)=z\}} \frac{1}{|det(D_{y_0}y^{*}(t,{{y^0}}(t,z)))|} dt,$$
    where $D_{y_0}y^{*}(\cdot,\cdot)$ denotes the derivative of $y^{*}$  with respect to its second argument.  This reflects that  the main difference between Problems \ref{Problem:MachLearningProblem1} and \ref{Problem:MachLearningProblem2}  consists in how they weigh the initial conditions. We also note that as consequence of classical ODE theory  the hypotheses on ${det(D_{y_0}y^{*})}$ expressed above hold if $V\in C^{2}(\Omega)$ .
\end{rem}

\section{Convergence analysis}

This section is devoted to the convergence analysis for Methods \ref{Problem:LearningProblemOpt} - \ref{Problem:MachLearningProblem2}. We first state the two main results and then provide their proofs below.  For  \Cref{Problem:LearningProblemOpt} we have the following convergence property.

\begin{theo}
\label{theo:conv:LearningProblemOpt}
Let Hypotheses \ref{hypo:approx:vf} and \ref{hypo:approx:c2} hold for sequences $(\Theta_n,\norm{\cdot}_{n})$ , $\mathcal{P}_n$, $v_n$ and $T_n$. Then there exist a sequence $\alpha_{n}\!>\!0$ and a sub-sequence $((\Theta_{k_n},\norm{\cdot}_{k_n}),\mathcal{P}_{k_n},v_{k_n})\!$ for which  \Cref{Problem:LearningProblemOpt}  with setting $S_n=((\Theta_{k_n},\norm{\cdot}_{k_n}),\mathcal{P}_{k_n},\alpha_n,v_{k_n})$ has at least one solution. Further, for every sequence of solutions $\theta^{*}_n\in\Theta_{k_n}$ to \Cref{Problem:LearningProblemOpt}  with $S=S_n$ we have
$$ \lim_{n\to\infty}\mathcal{V}_{T_n}(\cdot,u_{v_{k_n}(\theta_{n}^*)})=V\mbox{ in }L^{1}(\omega) \mbox{ and } \lim_{n\to\infty}\alpha_n\mathcal{P}_{k_n}(\theta_n^*)=0.$$
\end{theo}
\begin{rem}
In the case of a nested setting, that is, if $\Theta_n\subset\Theta_{n+1}$ and $\mathcal{P}_{n+1}(\theta)=\mathcal{P}_{n}(\theta)$ for $\theta\in\Theta_n$, it is possible to get the convergence result for the whole sequence of settings and not only for a sub-sequence of them by associating the time horizons and penalty coefficients in an appropriate manner. For instance, for $j\in \N$ let us consider $n_j$ the smallest $n\in\N$ such that $k_n\leq j\leq k_{k+1}$. Then the sequence of settings $\tilde{S}_{j}=((\Theta_{j},\norm{\cdot}_{j}\mathcal{P}_{j},\alpha_{n_j},v_{j})$ and the sequence of times horizons $\tilde{T}_{j}=T_{n_j}$ satisfy
$$ \lim_{j\to\infty}\mathcal{V}_{\tilde{T}_j}(\cdot,u_{v_n(\tilde{\theta}_j*)})=V\mbox{ in }L^{1}(\omega) \mbox{ and } \lim_{n\to\infty}\alpha_{n_j}\mathcal{P}_j(\tilde{\theta}_j^*)=0,$$
where $\tilde{\theta}_j^*\in\Theta_j$ is the solution to \Cref{Problem:LearningProblemOpt} with $S=\tilde{S}_j$.
\end{rem}

In order to prove the convergence for  \Cref{Problem:MachLearningProblem2}, we shall need to assume \Cref{hypo:approx:restricc2} instead of   Hypotheses \ref{hypo:approx:vf} and \ref{hypo:approx:c2}. In this case, the convergence holds in $L^{\infty}(\omega)$.  The regularity assumption on $V$ expressed in Hypothesis \ref{hypo:approx:restricc2}, however,  is more restrictive  than the hypotheses for Theorem \ref{theo:conv:LearningProblemOpt} where the assumptions on $V$ are only those involved in Hypothesis \ref{hypo:approx:vf} . We shall see that the convergence for  \Cref{Problem:MachLearningProblem2} also implies the convergence for  \Cref{Problem:MachLearningProblem1}, by using a change of variables as was already mentioned in \Cref{rem:regressionproblems}.

%

\begin{theo}
\label{theo:conv:LearningProblemReg}
Let $V$, $(\Theta,\norm{\cdot}_{\Theta})$, and a sequence   $S_n=((\Theta_n,\norm{\cdot}_{n}),\mathcal{P}_n,\alpha_n,v_n)$ be such that \Cref{hypo:approx:restricc2} holds. Further assume that $\lim_{n\to\infty}\alpha_n=0$ and there exists a constant $C>0$ such that $\norm{B^\top(\nabla v_n(\theta_n)-V)}_{L^2(\Omega;\R^m)}^2\leq C \alpha_n$ where $\theta_n\in\Theta_n$ is as in \Cref{hypo:approx:restricc2} with $g=V$. Then there exists a sequence of times $T_{n}>0$  for which every sequence $\theta^{*}_n\in\overline{\mathcal{O}_{T_n}}$ of solutions of either  \Cref{Problem:MachLearningProblem1} or \Cref{Problem:MachLearningProblem2} with $S=S_n$ and $T=T_n$ satisfies that $\theta_n^{*}\in \mathcal{O}_{T_n}$ and
$$ \lim_{n\to\infty}\mathcal{V}_{T_n}( \cdot,u_{v_n(\theta_n^*)})=V\mbox{ in }L^{\infty}(\omega). $$
\end{theo}

To verify these two results we start by proving the equivalence between \eqref{hypo:approx:vf:lim},
and the following two properties
\beq
\lim_{n\to\infty}\mathcal{V}_{\hat{T}_n}(\cdot,u_{V_n})=V\mbox{ in }L^{1}(\omega),
\label{hypo:approx:vf:lim2}
\eeq
and
\beq
\lim_{n\to\infty}\int_{\omega}(\mathcal{V}_{\hat{T}_n}(y_0 ,u_{V_n})-V(y_0))^{+}dy_0=0.
\label{hypo:approx:vf:lim3}
\eeq

\begin{lemma}
\label{lemma:NegativePartEverywhereConv}
    Let $T_n>0$ be an increasing sequence with $\lim_{n\to \infty}T_{n}=\infty$, let $\delta$ be a positive constant,  and let  $V_n\in C^{2}(\overline{\Omega})$ be as sequence, such that the tuple
     $(T_n,u_{V_{n}},\delta,y_0)$ satisfies
     \Cref{hypo:stab} for all $y_0\in \omega$ and $n\in\N$. Then for all $y_0\in \omega$ the following identity holds
    \beq \lim_{n\to \infty}(V(y_0)-\mathcal{V}_{T_{n}}(y_0,u_{V_{n}}))^{+}=0 .\label{lemma:NegativePartEverywhereConv:lim}\eeq
\end{lemma}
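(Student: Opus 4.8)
The plan is to reduce \eqref{lemma:NegativePartEverywhereConv:lim} to the one-sided lower bound $\liminf_{n\to\infty}\mathcal V_{T_n}(y_0,u_{V_n})\ge V(y_0)$ for each fixed $y_0\in\omega$. This suffices: since $t\mapsto t^{+}$ is continuous and non-decreasing, $\limsup_{n}(V(y_0)-\mathcal V_{T_n}(y_0,u_{V_n}))^{+}=\big(V(y_0)-\liminf_{n}\mathcal V_{T_n}(y_0,u_{V_n})\big)^{+}\le 0$, and the quantity is nonnegative. I will use that $V(y_0)<\infty$ on $\omega$ (guaranteed by the standing assumptions), that the integrand defining $\mathcal V$ is nonnegative, and, crucially, that by \Cref{hypo:stab} the closed-loop trajectory $y_n:=y(\cdot;y_0,u_{V_n})$ stays in the bounded set $\Omega_\delta\subset\overline\Omega$ for all $t\in[0,T_n]$.

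Fixing $y_0\in\omega$, I would pass to a subsequence (not relabelled) along which $\mathcal V_{T_n}(y_0,u_{V_n})\to M:=\liminf_n\mathcal V_{T_n}(y_0,u_{V_n})$; if $M=\infty$ the bound is trivial, so assume $M<\infty$. Writing $\bar u_n(t):=u_{V_n}(y_n(t))$ for the open-loop control generated by the feedback, one has $\mathcal V_{T_n}(y_0,u_{V_n})=\int_0^{T_n}(\ell(y_n)+\frac\beta2|\bar u_n|^2)\,dt$, so $\frac\beta2\int_0^{T_n}|\bar u_n|^2\,dt\le\mathcal V_{T_n}(y_0,u_{V_n})$ is bounded and the $\bar u_n$ are bounded in $L^2(0,S;\R^m)$ on every window $[0,S]$. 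A diagonal extraction then produces a further subsequence and a limit $\bar u\in L^2_{loc}(0,\infty;\R^m)$ with $\bar u_n\weak\bar u$ in $L^2(0,S;\R^m)$ for all $S>0$. Since the $y_n$ are uniformly bounded in $\overline\Omega$ and $f$ is Lipschitz there, the $y_n$ are equibounded and equi-H\"older on each $[0,S]$; Arzel\`a--Ascoli together with $\bar u_n\weak\bar u$ (tested against indicators) gives $y_n\to y_{\bar u}$ uniformly on each $[0,S]$, where $y_{\bar u}$ solves \eqref{ControlProblem:ODE} with control $\bar u$, remains in $\overline\Omega$, and exists for all $t\ge 0$; hence $\bar u$ is admissible for \eqref{ControlProblem}.

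It then remains to estimate the cost of $\bar u$ window by window. For fixed $S>0$ and $n$ large enough that $T_n\ge S$, uniform convergence $y_n\to y_{\bar u}$ and continuity of $\ell$ give $\int_0^S\ell(y_n)\,dt\to\int_0^S\ell(y_{\bar u})\,dt$, while weak lower semicontinuity of the $L^2$-norm gives $\int_0^S|\bar u|^2\,dt\le\liminf_n\int_0^S|\bar u_n|^2\,dt$. Combining these and bounding each window cost by the full cost (here $\ell\ge 0$ is used),
\begin{equation*}
\int_0^S\Big(\ell(y_{\bar u})+\tfrac\beta2|\bar u|^2\Big)\,dt\le\liminf_n\int_0^S\Big(\ell(y_n)+\tfrac\beta2|\bar u_n|^2\Big)\,dt\le\liminf_n\mathcal V_{T_n}(y_0,u_{V_n})=M.
\end{equation*}
Letting $S\to\infty$ by monotone convergence yields $\int_0^\infty(\ell(y_{\bar u})+\frac\beta2|\bar u|^2)\,dt\le M$, whence by optimality $V(y_0)\le M$, which is the desired bound.

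The step I expect to be the main obstacle is the compactness argument of the second paragraph, namely the continuity of the control-to-state map under weak $L^2$-convergence and, above all, the guarantee that the trajectories never leave a region on which $f$ is Lipschitz. This is exactly where \Cref{hypo:stab} is indispensable: it confines every $y_n$ to the fixed bounded set $\Omega_\delta\subset\overline\Omega$, supplying the uniform bounds needed for Arzel\`a--Ascoli and for the boundedness of the $\bar u_n$. Combined with $\ell\ge 0$, which makes the windowed cost monotone in the horizon and lets me pass from finite windows to the infinite horizon, these are the ingredients that drive the proof. I would deliberately avoid the route through the dynamic programming inequality $V(y_0)\le\mathcal V_{T_n}(y_0,u_{V_n})+V(y_n(T_n))$, since it merely shifts the difficulty to proving $V(y_n(T_n))\to 0$ at the moving endpoint, which is no simpler.
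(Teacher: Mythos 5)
Your proposal is correct and follows essentially the same route as the paper: extract a subsequence with controls weakly convergent in $L^2_{loc}$ and trajectories convergent in $C_{loc}$ (using \Cref{hypo:stab} for the uniform bounds), pass to the limit in the state equation, use weak lower semicontinuity of the cost on finite windows, let the window tend to infinity, and conclude from the optimality of $V$. The only cosmetic difference is that the paper phrases this as a contradiction argument (assuming the cost stays below $V(y_0)-\kappa$ along a subsequence) whereas you prove the equivalent bound $\liminf_n\mathcal V_{T_n}(y_0,u_{V_n})\ge V(y_0)$ directly.
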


\begin{proof}
    By contradiction, let us assume that there exists $y_0$ such that \eqref{lemma:NegativePartEverywhereConv:lim} does not hold. This implies that there exist $\kappa>0$ and a sub-sequence still denoted by $V_{n}$ such that
    \beq (V(y_0)-\mathcal{V}_{T_{n}}(y_0,u_{V_n}))^{+}>\kappa
    \label{lemma:NegativePartEverywhereConv:proof:lower}
    \eeq
    for all $n\in\N$. Here $r^+$ stands for the positive part of $r\in \mathbb{R}$.  Let us denote $y_{n}=y(t;y_0,u_{V_{n}})$ and $u_{n}=u_{V_n}\circ y_{n}=-\frac{1}{\beta}B^\top\nabla V_n(y_n)$. By \Cref{hypo:stab} and the boundedness of $\Omega$  we have that $\{\|y_{n}\|_{L^{\infty}((0,T_{n});\R^{d})}\}_{n=1}^{\infty}$ is bounded. By \eqref{lemma:NegativePartEverywhereConv:proof:lower} and the definition of $u_n$ we have that for all $n\in\N$
    \beq  \int_{0}^{T_{n}}\left(\ell (y_{n}(t))+\frac{\beta}{2}|u_{n}(t)|^2\right)dt\leq V(y_0)-\kappa.\label{lemma:NegativePartEverywhereConv:proof:Vesp}\eeq
    This implies that $\{\|u_{n}\|_{L^{2}((0,T_{n});\R^m)}\}_{n=1}^{\infty}$ is bounded. This, together with the fact that  $y_{n}$ is a solution of
    \beq \frac{d}{dt}y_{n}(t)=f(y_{n}(t))+Bu_{n}(t),\ t\in (0,T_{n}), \ y_{n}(0)=y_0,\label{lemma:NegativePartEverywhereConv:proof:yepseq}\eeq
    that $f$ is Lipschitz continuous on bounded sets, and that  $\{y_n\}_{n=1}^\infty$ is bounded in $L^{\infty}((0,T_{n});\R^{d})$, we obtain that $\{y_{n}\}_{n=1}^\infty$ is uniformly bounded in $H^{1}((0,T_{n});\R^{d})$. Therefore, by a diagonalization argument, with respect to $\{T_n\}_{n\in\N}$ and $\{(y_n,u_n)\}_{n\in\N}$,  there exists $y^{*}\in H_{loc}^{1}((0,\infty);\R^{d})$ and $u\in L_{loc}^{2}((0,\infty);\R^{m})$ such that by passing to a sub-sequence, if  needed, we have
    \begin{equation}
    \begin{array}l
    u_{n}\weak u^{*}\mbox{ in }L_{loc}^{2}((0,\infty);\R^{m}), \ y_{n}\weak y^{*}\mbox{ in }H_{loc}^{1}((0,\infty);\R^{d}), \\[1.5ex]
    \mbox{and }y_{n}\to y^{*}\mbox{ in }C_{loc}([0,\infty);\R^{d}),
    \label{lemma:NegativePartEverywhereConv:proof:convergences}
    \end{array}
    \end{equation}
    as $n$ tends to infinity. Using these facts together with \eqref{lemma:NegativePartEverywhereConv:proof:yepseq} we have that
    \beq \frac{d}{dt}y^{*}(t)=f(y^{*}(t))+Bu^{*}(t),\ t\in (0,\infty), \ y^{*}(0)=y_0.\label{lemma:NegativePartEverywhereConv:proof:yeq}\eeq
    Next, let $T>0$ be a fixed time horizon and choose $n_0\in\N$ such that for all $n>n_0$ we have $T<T_{n}$. Hence by \eqref{lemma:NegativePartEverywhereConv:proof:Vesp} we get that for all $n>n_0$
    \beq  \int_{0}^{T}\left(\ell (y_{n}(t))+\frac{\beta}{2}|u_{n}(t)|^2\right)dt\leq V(y_0)-\kappa.\label{lemma:NegativePartEverywhereConv:proof:VepsT}\eeq
    Using \eqref{lemma:NegativePartEverywhereConv:proof:convergences}, the continuity of $\ell$ and the weak lower semi-continuity of the norm of $L^{2}((0,T);\R^{m})$,  letting $n$ tend to infinity in \eqref{lemma:NegativePartEverywhereConv:proof:VepsT} we obtain
    \beq  
    \int_{0}^{T}\left(\ell (y^*(t))+\frac{\beta}{2}|u^*(t)|^2\right)dt\leq V(y_0)-\kappa.
    \label{lemma:NegativePartEverywhereConv:proof:VT}
    \eeq
    Since this holds for any arbitrary $T$ we get that $u^{*}\in L^{2}((0,\infty);\R^{d})$ and
    $$ \int_{0}^{\infty}\left(\ell (y^*(t))+\frac{\beta}{2}|u^*(t)|^2\right)dt\leq V(y_0)-\kappa,$$
    which contradicts the fact that $V$ is the value function. Consequently, we have that for all $y_0\in \omega$
    \beq \lim_{n\to \infty}(V(y_0)-\mathcal{V}_{T_{n}}(y_0,u_{V_{n}}))^{+}=0.
    \eeq
    
\end{proof}

\begin{cor}
\label{cor:convequivalence}
   Under the hypotheses of the previous lemma
   \eqref{hypo:approx:vf:lim}, \eqref{hypo:approx:vf:lim2} and \eqref{hypo:approx:vf:lim3} are equivalent.
\end{cor}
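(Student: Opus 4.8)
The plan is to recast all three statements in terms of the positive and negative parts of the single sequence of functions
\[
D_n(y_0):=\mathcal{V}_{T_n}(y_0,u_{V_n})-V(y_0),\qquad y_0\in\omega,
\]
writing $D_n=D_n^+-D_n^-$ and $|D_n|=D_n^++D_n^-$, where $D_n^-=(V-\mathcal{V}_{T_n}(\cdot,u_{V_n}))^+$ (with $T_n$ as in the lemma). With this notation \eqref{hypo:approx:vf:lim} is a statement about $\int_\omega D_n$, the $L^1$ convergence \eqref{hypo:approx:vf:lim2} is a statement about $\int_\omega|D_n|=\int_\omega D_n^++\int_\omega D_n^-$, and \eqref{hypo:approx:vf:lim3} is exactly $\int_\omega D_n^+\to 0$. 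Thus it suffices to compare these three quantities, and the whole argument funnels through the single auxiliary claim that $\int_\omega D_n^-\to 0$.

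That auxiliary claim is the one genuinely nontrivial step, and I would establish it first. \Cref{lemma:NegativePartEverywhereConv} provides exactly the pointwise convergence $D_n^-(y_0)=(V(y_0)-\mathcal{V}_{T_n}(y_0,u_{V_n}))^+\to 0$ for every $y_0\in\omega$. To upgrade this to convergence of the integrals I would invoke dominated convergence: since $\ell\ge 0$ and $\beta>0$ we have $\mathcal{V}_{T_n}\ge 0$, and $V\ge 0$ as the infimum of nonnegative costs, hence $0\le D_n^-\le V$; as $V\in L^1(\Omega)$ and $\omega\subset\Omega$, the constant-in-$n$ majorant $V$ is integrable on $\omega$, so dominated convergence yields $\int_\omega D_n^-\to 0$.

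With this in hand the equivalences are bookkeeping. For \eqref{hypo:approx:vf:lim}$\Leftrightarrow$\eqref{hypo:approx:vf:lim3}: from $\int_\omega D_n=\int_\omega D_n^+-\int_\omega D_n^-$ together with $\int_\omega D_n^-\to 0$ we get $\limsup_n\int_\omega D_n=\limsup_n\int_\omega D_n^+$; since $\int_\omega D_n^+\ge 0$, the vanishing of this $\limsup$ forces $\liminf_n\int_\omega D_n^+=0$ as well, hence it is equivalent to $\lim_n\int_\omega D_n^+=0$, which is \eqref{hypo:approx:vf:lim3}. For \eqref{hypo:approx:vf:lim2}$\Leftrightarrow$\eqref{hypo:approx:vf:lim3}: from $\int_\omega|D_n|=\int_\omega D_n^++\int_\omega D_n^-$ and $\int_\omega D_n^-\to 0$ we see directly that $\int_\omega|D_n|\to 0$ if and only if $\int_\omega D_n^+\to 0$. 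Consequently all three conditions are equivalent to $\int_\omega D_n^+\to 0$, which proves the corollary.

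I would expect the dominated-convergence step to be the only place requiring genuine care, precisely because it is where the pointwise conclusion of the lemma is converted into the $L^1$-type statements appearing in \eqref{hypo:approx:vf:lim}--\eqref{hypo:approx:vf:lim3}; the integrability of $V$ on $\omega$ and the uniform majorization $D_n^-\le V$ are exactly what legitimize passing the pointwise limit through the integral.
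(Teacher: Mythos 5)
Your proposal is correct and follows essentially the same route as the paper: the key step in both is applying dominated convergence to $(V-\mathcal{V}_{T_n}(\cdot,u_{V_n}))^+$ with majorant $V\in L^1$, using the pointwise convergence from \Cref{lemma:NegativePartEverywhereConv}, after which the three equivalences reduce to the same positive/negative-part bookkeeping. Your write-up is if anything slightly more systematic in organizing the bookkeeping around $D_n^\pm$, but there is no substantive difference.
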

\begin{proof}
  Since $(V(y_0)-\mathcal{V}_{T_{n}}(y_0,u_{V_n}))^{+}\leq V(y_0)$ we can apply the dominated convergence theorem together with \eqref{lemma:NegativePartEverywhereConv:lim} to get that
    \beq \lim_{n\to\infty}\int_{\omega}(V(y_0)-\mathcal{V}_{T_{n}}(y_0,u_{V_n}))^{+}dy_0=0.\label{lemma:convequivalence:proof:pospartconv}\eeq
    Rewriting the $L^{1}(\omega)$ norm of $V(y_0)-\mathcal{V}_{T_{n}}(y_0,u_{V_n})$ we find that
    \begin{equation*}
    \begin{array}{l}
\displaystyle    \int_{\omega}|V(y_0)-\mathcal{V}_{T_{n}}(y_0,u_{V_n})|dy_0\\[1.5ex]
\displaystyle    =\int_{\omega}(\mathcal{V}_{T_{n}}(y_0,u_{V_n}))-V(y_0))^{+}dy_0+\int_{\omega}(V(y_0)-\mathcal{V}_{T_{n}}(y_0,u_{V_n}))^{+}dy_0.$$
    \end{array}
    \end{equation*}
    Since the  convergence to 0 of the last term in the previous inequality is always granted under the hypotheses, we get the equivalence of \eqref{hypo:approx:vf:lim2} and \eqref{hypo:approx:vf:lim3}. Further, condition \eqref{hypo:approx:vf:lim2} clearly implies \eqref{hypo:approx:vf:lim}. To conclude we only need to prove that \eqref{hypo:approx:vf:lim} implies either \eqref{hypo:approx:vf:lim2} or \eqref{hypo:approx:vf:lim3}. For this purpose we note that
     \begin{equation*}
     \begin{array}{ll}
\displaystyle     0 & \displaystyle\le \liminf \int_{\omega}(\mathcal{V}_{T_{n}}(y_0,u_{V_n})-V(y_0))^{+}dy_0\\[1.5ex] \displaystyle
     & \displaystyle\le \lim  \int_{\omega}(V(y_0)-\mathcal{V}_{T_{n}}(y_0,u_{V_n}))^{+}dy_0
     + \limsup \int_{\omega}(\mathcal{V}_{T_{n}}(y_0,u_{V_n})-V(y_0))dy_0.
     \end{array}
     \end{equation*}
    Hence, if \eqref{hypo:approx:vf:lim} holds, we can combine it with \eqref{lemma:convequivalence:proof:pospartconv} to get \eqref{hypo:approx:vf:lim3}.
\end{proof}

We are now prepared to discuss the feasibility of \Cref{hypo:approx:vf}.

\begin{rem}\label{remH3.9}
Concerning the feasibility of  \Cref{hypo:approx:vf}, we assume that \Cref{hypo:Lyapunov} holds  and that
$V$ is $\alpha-$Hölder continuous with $\alpha>\frac{1}{2}$. Then we can use  \cite[Lemma 7.1c, Theorem 5.4]{KuVa2}, with $\phi=V \in C(\Omega)$, and  choose $\delta>0$,   to  construct sequences $\{u_{V_{n}}\}_{n\in\N}$  in $C^2(\Omega)$ and $\{T_n\}_{n\in\N}$ with $\lim_{n\to \infty}T_n= \infty$  such that   the tuple  $(T_n,u_{V_n},\delta,y_0)$ satisfies \Cref{hypo:stab} for all $y_0\in  \omega$ and all $n$. This is the first requirement in  \Cref{hypo:approx:vf}.  
 
Regarding the second part, using again \cite[Theorem 5.4]{KuVa2} the tuple  $(T_n,u_{V_n},\delta,y_0)$ can be chosen such that 
\beq 
0\le \int_{\omega}\big(\mathcal{V}_{T_{n}}(\cdot,u_{V_n})+V(y(T_n,u_{V_n};\cdot) - V(\cdot) \big) dy_0  \to 0 \text{ for } n \to \infty.
\eeq
Since $V(\cdot) \ge 0$ this implies that 
$0\le \int_{\omega}\big(\mathcal{V}_{T_{n}}(\cdot,u_{V_n}) - V(\cdot) \big)^+ dy_0  \to 0 \text{ for } n \to \infty.$
This together with \eqref{lemma:convequivalence:proof:pospartconv} implies that 
   $ \lim_{n\to \infty}\int_{\omega}(\mathcal V_{{T}_n}(y_0,u_{V_{n}})-V(y_0))dy_0= 0,$
   as required.
 \end{rem}

Before continuing we need to establish the following technical result concerning the stability of the
solution to \eqref{closedloop} if  $u$  is of the form \eqref{defi:feedback} and perturbations occur in $v$. Its proof uses Lemma  \ref{lemma:LipschitzContCalV}.

\begin{cor}
    \label{cor:Stab:c2}
    Let $T>0$ be a time horizon, $y_0\in\omega$, $g_1\in C^{2}(\Omega)$, $g_2\in C^{2}(\Omega)$ and $\delta>0$, be such that $(T,u_{g_2},\delta,y_0)$ satisfies \Cref{hypo:stab}. Assume that
    \beq
        \frac{|B|\norm{u_{g_1}-u_{g_2}}_{L^\infty(\Omega_{\frac{\delta}{4}};\R^m)}}{a}\left(e^{Ta}-1\right)\leq \frac{\delta}{2},
        \label{cor:Stab:c2:error}
    \eeq
    where
    \beq a=\norm{Df}_{L^{\infty}(\Omega_{\frac{\delta}{4}};\R^{d\times d})}+\frac{|B|^2}{\beta}\norm{\nabla^2 g_2}_{L^{\infty}(\Omega_{\frac{\delta}{4}};\R^{d\times d})},\label{cor:Stab:c2:a}\eeq 
    and $Df$ stands for the derivative of $f$. 
    Then $(T,u_{g_1},\frac{\delta}{2},y_0)$ satisfies \Cref{hypo:stab}.
\end{cor}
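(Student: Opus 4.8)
The plan is to control the deviation between the two closed-loop trajectories by a Gronwall argument and then transfer the stability of $y_2:=y(\cdot;y_0,u_{g_2})$ to $y_1:=y(\cdot;y_0,u_{g_1})$. Since $g_1,g_2\in C^{2}(\Omega)$, the feedbacks $u_{g_1}=-\frac1\beta B^{\top}\nabla g_1$ and $u_{g_2}$ are $C^1$ on $\Omega$, so $f+Bu_{g_i}$ is locally Lipschitz and both trajectories exist and are unique locally; moreover $y_0\in\Omega_\delta$ because $(T,u_{g_2},\delta,y_0)$ satisfies \Cref{hypo:stab}, so the starting point already lies in the interior region $\Omega_{\delta/4}$.

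First I would write, at any time at which both trajectories lie in $\Omega_{\delta/4}$,
\[ y_1'(t)-y_2'(t)=\big(f(y_1)-f(y_2)\big)+B\big(u_{g_2}(y_1)-u_{g_2}(y_2)\big)+B\big(u_{g_1}(y_1)-u_{g_2}(y_1)\big), \]
and estimate the three terms by $\norm{Df}_{L^\infty(\Omega_{\delta/4})}|y_1-y_2|$, by $\frac{|B|^2}{\beta}\norm{\nabla^2 g_2}_{L^\infty(\Omega_{\delta/4})}|y_1-y_2|$ (using $u_{g_2}=-\frac1\beta B^\top\nabla g_2$ and the mean value inequality), and by $|B|\,\norm{u_{g_1}-u_{g_2}}_{L^\infty(\Omega_{\delta/4})}$, respectively. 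Setting $\rho(t)=|y_1(t)-y_2(t)|$ and using $\frac{d}{dt}\rho\le|y_1'-y_2'|$, this yields $\rho'\le a\rho+b$ with $a$ as in \eqref{cor:Stab:c2:a} and $b=|B|\,\norm{u_{g_1}-u_{g_2}}_{L^\infty(\Omega_{\delta/4})}$. Since $\rho(0)=0$, Gronwall's inequality gives $\rho(t)\le\frac{b}{a}(e^{at}-1)$, which by \eqref{cor:Stab:c2:error} is bounded by $\frac\delta2$ on $[0,T]$.

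The delicate point, and the step I expect to be the main obstacle, is that the constants $a,b$ are only legitimate while $y_1$ has not left $\Omega_{\delta/4}$, so the differential inequality is a priori valid only up to a first exit time. I would close this with a continuation argument: set $\tau^{*}=\sup\{\tau\in[0,T]:y_1 \text{ exists on }[0,\tau]\text{ and }\rho(s)\le\tfrac\delta2\ \forall s\in[0,\tau]\}$, which is positive by local existence and $\rho(0)=0$. On $[0,\tau^{*})$ the strict bound $\mathrm{dist}(y_2(s),\partial\Omega)>\delta$ together with $\rho(s)\le\frac\delta2$ forces $\mathrm{dist}(y_1(s),\partial\Omega)>\delta-\frac\delta2=\frac\delta2$, i.e. $y_1(s)\in\Omega_{\delta/2}\subset\Omega_{\delta/4}$, so the Gronwall estimate applies and moreover gives, for every $t<T$, the strict inequality $\rho(t)\le\frac{b}{a}(e^{at}-1)<\frac{b}{a}(e^{aT}-1)\le\frac\delta2$.

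Finally I would argue that $\tau^{*}=T$: if $\tau^{*}<T$, the strictness just obtained shows $y_1(\tau^{*})$ lies in the interior region $\Omega_{\delta/4}$ with $\rho(\tau^{*})<\frac\delta2$, so $y_1$ extends past $\tau^{*}$ and, by continuity, $\rho\le\frac\delta2$ persists on a slightly larger interval, contradicting maximality. Hence $\tau^{*}=T$; boundedness of $f$ and $u_{g_1}$ on $\Omega_{\delta/4}$ lets $y_1$ be continued up to and including $T$, and the same distance estimate yields $y_1(t)\in\Omega_{\delta/2}$ for all $t\in[0,T]$. This is precisely \Cref{hypo:stab} for the tuple $(T,u_{g_1},\frac\delta2,y_0)$. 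The degenerate cases are immediate: if $b=0$ then $y_1\equiv y_2$, and if $a=0$ the quotient $\frac{b}{a}(e^{aT}-1)$ is read as its limit $bT$ and the same reasoning applies.
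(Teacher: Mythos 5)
Your proof is correct and follows essentially the same route as the paper's: a Gronwall bound $|y_1(t)-y_2(t)|\le \frac{b}{a}(e^{at}-1)\le\frac{\delta}{2}$ valid while $y_1$ remains in $\Omega_{\delta/4}$, combined with the triangle inequality against $\mathrm{dist}(y_2(t),\partial\Omega)\ge\delta$ to keep $y_1$ inside $\Omega_{\delta/2}$, closed by a bootstrap. The only cosmetic differences are that you derive the Gronwall estimate inline (the paper cites it as \eqref{lemma:LipschitzContCalV:state} from Lemma~\ref{lemma:LipschitzContCalV} in the Appendix) and you phrase the bootstrap as a maximal-time continuation argument rather than a contradiction at the escape time from $\Omega_{\delta/2}$.
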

\begin{proof}
    For abbreviation we set $y_1(t)=y(t;y_0,u_{g_1})$ and $y_2(t)=y(t;y_0,u_{g_2})$ for all $t\geq 0$ such that  these expressions are well defined. Let $\hat{T}>0$ be the escape time of $y_1$ from $\Omega_{\frac{\delta}{2}}$, i.e. $\hat{T}=\sup\{\tilde{T}>0:y_1(t)\in \Omega_{\frac{\delta}{2}} \mbox{ for all }t\in [0,\tilde{T}]\}$. We  need to prove that $\hat{T}\geq T$. By contradiction,  assume that $\hat{T}<T$. Then there exists $\Tilde{T} \in (\hat{T},T)$ such that for all $t\in (\hat{T},\tilde{T}]$ we have $y_1(t)\in\Omega_{\frac{\delta}{4}}\setminus\Omega_{\frac{\delta}{2}}$. Combining
    \eqref{lemma:LipschitzContCalV:state} (with $\delta$ replaced by $\frac{\delta}{4}$)  from the Appendix, which establishes the Lipschitz continuous dependence of $y$ on $u_g$,  and \eqref{cor:Stab:c2:error} we obtain for all $ t\in (\hat{T},\tilde{T}]$ that
    $$|y_1(t)-y_2(t)|\leq \frac{\delta}{2}.$$
Here we use that if  $(T,u_{g_2},\delta,y_0)$ satisfies \Cref{hypo:stab} then it is also satisfied with  $(T,u_{g_2},\frac{\delta}{4},y_0)$. Further again using that $(T,u_{g_2},\delta,y_0)$ satisfies \Cref{hypo:stab} and that $\Tilde{T}<T$ we get for all $t\in (\hat{T},\tilde{T}]$ and for all $z\in \partial\Omega$
    $$ \delta\leq |z-y_2(t)|< |z-y_1(t)|+|y_1(t)-y_2(t)|\leq \frac{\delta}{2}+|z-y_1(t)|.$$
    This implies that for $t\in (\hat{T},\tilde{T}]$ we have $|z-y_1(t)|> \frac{\delta}{2}$ for all $z\in \partial\Omega$ which implies that $y_1(t)\in \Omega_{\frac{\delta}{2}}$ for $t\in (\hat{T},\tilde{T}]$. This contradicts the definition of $\hat{T}$ and therefore $\hat{T}\geq T$.
\end{proof}

\begin{proof}[Proof of Theorem \ref{theo:conv:LearningProblemOpt}]
    For proving the existence of a sub-sequences of $\alpha_n$, $T_n$ and $((\Theta_n,\norm{\cdot}_n),\mathcal{P}_n,v_n)$, such that  \Cref{Problem:LearningProblemOpt} has a solution we proceed similarly to the proof of Lemma 3 in \cite{KuVaWal}. Let $T_n$, $ V_n$ and  $\delta$ be those appearing in \Cref{hypo:approx:vf}. For $n\in\N$ fixed  we use \Cref{hypo:approx:c2} with $g=V_n$ to define $\theta_{k,n} \in \Theta_k$ and $\alpha_{k,n}>0$ such that $  \lim_{k\to\infty} v_{k}(\theta_{k,n})=V_n\mbox{ in } C_{loc}^1(\Omega)$ and $\lim_{k\to\infty} \alpha_{k,n}\mathcal{P}_k(\theta_{k,n})=0$.

    Let $y_0\in\omega$ be fixed and set $u_n(x)=u_{V_n}(x)$ and $u_{k,n}(x)=u_{v_k(\theta_{k,n})}(x)$ for all $x\in\Omega$ and $n,k\in\N$. Let  $L_n=\norm{\nabla^2 V_n}_{L^{\infty}\big(\Omega_{\frac{\delta}{4}};\R^{d\times d}\big)}$. Since for each $n\in\N$, $v_{k}(\theta_{k,n})$ converges to $V_n$ in $C_{loc}^{1}(\Omega)$,  there exists $k(n)$ such that for all $k>k(n)$ we have
    \beq
    \begin{array}l
    \left(e^{(L_f+\frac{|B|^2}{\beta}L_{n})T_n}-1\right)\frac{|B|\|u^{k,n}- u^{n}\|_{L^{\infty}(\Omega_{\frac{\delta}{4}};\R^m)}}{(L_f+\frac{|B|^2}{\beta}L_{n})}\leq \frac{\delta}{2}\mbox{ and }\norm{u^{k,n}- u^{n}}_{L^{\infty}(\Omega_{\frac{\delta}{4}};\R^m)}\leq 1.
    \label{theo:conv:LearningProblemOpt:ineq:delta2}
    \end{array}
    \eeq
    In view of \eqref{theo:conv:LearningProblemOpt:ineq:delta2} and the fact that $(T_n,u_n,\delta,y_0)$ satisfies \Cref{hypo:stab} we can apply \Cref{cor:Stab:c2} with $g_1=v_{k}(\theta_{k,n})$, $g_2=V_n$ and $T=T_n$ to obtain that for all $y_0\in\omega$ and $k\geq k(n)$ that  the tuple $(T_n,u^{k,n},\frac{\delta}{2},y_0)$ satisfies \Cref{hypo:stab}. Consequently, we have that $\theta_{k,n}\in\mathcal{O}_{T_n,\Theta_k}$ for $k>k(n)$. Combining this, the fact that $\mathcal{P}_{k}$ is coercive and continuous, and that $\mathcal{V}_{T_{n}}$ is continuous, we get  by the direct method that  \Cref{Problem:LearningProblemOpt} with $S=((\Theta_k,\norm{\cdot}_k),\mathcal{P}_{k},\alpha_{k,n},v_k)$ and $T=T_n$ for $k>k(n)$ has an optimal solution.

For the second part we use \eqref{lemma:LipschitzContCalV:value}. With this in mind we set
 $$K_n=|B| \,\big[\norm{\nabla\ell}_{L^{\infty}(\Omega;\R^{d})}(e^{(L_f+\frac{|B|^2}{\beta}L_{n})T_{n}}-1)\frac{1}
   {(L_f+\frac{|B|^2}{\beta}L_{n})^2}+\frac{  T_n \beta}{2}(\norm{u^{n}}_{L^{\infty}(\Omega_{\frac{\delta}{2}})}+1)\big].$$
    Using \eqref{lemma:LipschitzContCalV:value} we obtain  for all $n\in \N$ and $k>k(n)$
    \beq \int_{\omega}\mathcal{V}_{T_n}(y_0,v_k(\theta_{k,n})) dy_0\leq \int_{\omega}\mathcal{V}_{T_{n}}(y_0,V_n)dy_0+|\omega|K_n\norm{u^{k,n}-u^{n}}_{L^{\infty}(\Omega_{\frac{\delta}{4}};\R^{m})}
    \label{theo:conv:LearningProblemOpt:vknbound}\eeq
    By \Cref{hypo:approx:c2} we can choose $k_n\ge k(n)$ with $\lim_{n\to \infty} k_n= \infty$,  such that
    \beq \lim_{n\to \infty}|\omega|K_n\norm{u^{k_n,n}-u^{n}}_{L^{\infty}(\Omega_{\frac{\delta}{4}};\R^{m})}+\alpha_{k_n,n}\mathcal{P}_{k_n}(\theta_{k_n,n})=0.\label{theo:conv:LearningProblemOpt:subseq}\eeq
    Let  $\theta_{n}^{*}\in\Theta_{k_n}$ be an optimal solution of \Cref{Problem:LearningProblemOpt} for the choices $T=T_n$ and $S=((\Theta_{k_n},\norm{\cdot}_{k_n}),\mathcal{P}_{k},\alpha_{k_n,n},v_{k_n})$. Then by \eqref{theo:conv:LearningProblemOpt:vknbound}
     $$\begin{array}{l}
          \displaystyle \int_{\omega}\mathcal{V}_{T_n}(y_0,v_{k_n}(\theta^{*}_{k_n})) dy_0 \!+\!\alpha_{k_n}\mathcal{P}_{k_n}(\theta_{k_n})\!\leq\! \int_{\omega}\mathcal{V}_{T_n}(y_0,v_{k_n}(\theta_{k_n,n})) dy_0 +\alpha_{k_n}\mathcal{P}_{k_n}(\theta_{k_n,n})\\
          \displaystyle \leq  \int_{\omega}\mathcal{V}_{T_n}(y_0,u_{V_n}) dy_0+\alpha_{k_n}\mathcal{P}_{k_n}(\theta_{k_n,n})+|\omega|K_n\norm{u^{k_n,n}-u^{n}}_{L^{\infty}(\Omega_{\frac{\delta}{4}};
     \R^{m})}.
     \end{array}$$
     Combining the above inequality, \eqref{theo:conv:LearningProblemOpt:subseq}  , and \Cref{hypo:approx:vf}, we arrive at
     $$ \limsup_{n\to\infty}\int_{\omega}\mathcal{V}_{T_n}(y_0,v_{k_n}(\theta^*_{k_n})) dy_0 \leq \int_{\omega}V(y_0) dy_0.$$
     Therefore, by \Cref{cor:convequivalence} we conclude the proof.
\end{proof}
\begin{proof}[Proof of Theorem \ref{theo:conv:LearningProblemReg}]
Let us start by considering $\theta^{*}_n$ as  solution of \Cref{Problem:MachLearningProblem2}. From the optimality  of $\theta^{*}_n$ we obtain that
\beq
\begin{array}l
\frac{1}{2\beta}\norm{B^{\top}(\nabla v_n(\theta^{*}_n)-\nabla V)}_{L^{2}(\Omega)}^2+\alpha_n\mathcal{P}_n(\theta_n^*) \\[1.5ex]
\leq \frac{1}{2\beta}\norm{B^{\top}(\nabla v_n(\theta_n)-\nabla V)}_{L^{2}(\Omega)}^2+\alpha_n\mathcal{P}_n(\theta_n)
\end{array}
\eeq
with $\theta_n$ as in  \Cref{hypo:approx:restricc2}.  By \Cref{hypo:approx:restricc2} we know that the right hand side of this inequality converges to $0$ as $n$ goes to infinity and consequently we obtain that
$$\lim_{n\to\infty}B^{\top}\nabla v_n(\theta^{*}_n)=B^{\top}\nabla V\mbox{ in }L^{2}(\Omega). $$
Moreover,  again by \Cref{hypo:approx:restricc2}  and the optimality of $\theta_{n}^{*}$ we obtain that $\norm{v_{n}(\theta_n^{*})}_{\Theta}$ is uniformly bounded for $n\in\N$. This together with the fact that $\Theta$ is compactly embedded in $C^{1}(\overline{\Omega})$ implies that $B^{\top}\nabla v_{n}(\theta_n^{*})$ converges to $B^{\top}\nabla V$ in $C(\overline{\Omega})$. Let $T_{n}>0$ be a sequence such that $\lim_{n\to\infty}e^{T_n a}\norm{B^{\top}(\nabla v_n(\theta_n^{*})-\nabla V)}_{L^{\infty}(\Omega_{\frac{\delta}{2}};\R^{m})}=0$, where $a$ is given by \eqref{cor:Stab:c2:a} with $g_2=V$.
   Using \Cref{cor:Stab:c2} we know that for all $y_0\in\omega$ the tuple $(T_n,u_{v_n(\theta_n^*)},\frac{\delta}{2},y_0)$ satisfies \Cref{hypo:stab} with $\delta$ replaced by $\frac{\delta}{2}$ for $n$ large enough. With \Cref{lemma:LipschitzContCalV} we conclude that
   $$ \lim_{n\to\infty}\mathcal{V}_{T_n}(\cdot,u_{v_n(\theta_n^*)})=V\mbox{ in }L^{\infty}(\omega)$$
   and thus the claim for \Cref{Problem:MachLearningProblem2} is verified.

   Next we turn now our attention to \Cref{Problem:MachLearningProblem1}. Let $\theta_n^{*}$ be {{an}} optimal solution for \Cref{Problem:MachLearningProblem1} with $S=S_n$ and $T=T_n$ as before.  Since $D_{y_0}y^{*}(t,y_0)$ is invertible for all $(t,y_0)\in [0,T]\times \overline{\omega}$, by using the change of variables $z=y^*(t;y_0)$  as in \Cref{rem:regressionproblems} we obtain
      $$  \int_{\omega}\int_{0}^{T} |B^{\top}(\nabla v(y^{*}(t;y_0)) -\nabla V(y^{*}(t;y_0)))|^2dtdy_0=\int_{\Gamma_T}|B^{\top}(\nabla v(z)-\nabla V(z))|^{2} g(z)dz$$
      Moreover, we know that $g$ is strictly positive in $\Gamma_T$ and therefore it  is equivalent to the Lebesgue measure in $\Gamma_T$. From this we can continue with the proof as in the case of \Cref{Problem:MachLearningProblem2} to obtain the desired convergence.

\end{proof}

\begin{rem} \label{rem:conn1}

Here  we informally provide  connections between  \Cref{Problem:LearningProblemOpt} and \Cref{Problem:MachLearningProblem1} which are obtained
through their optimality conditions. Let us consider a time horizon $T\in (0,\infty)$ and a setting $S=((\Theta,\norm{\cdot}_{\Theta}),\mathcal{P},\alpha,v)$. The differentiability of the function $\mathcal{V}_{T}(y_0,u)$ for $y_0\in\Omega$ and $u\in C^{2}(\Omega)$ with respect to both arguments comes from \cite{KuVa2} (see also the Appendix).  Using this and \eqref{def:mathcalVphi} we get  the optimality condition of \Cref{Problem:LearningProblemOpt} as
\beq
\begin{array}{l}
\displaystyle\frac{1}{\beta|\omega|}\!\int_{\omega}\int_{0}^{T}\!\!\big(\nabla_{y} v(\theta)(y(t;y_0,u_{v(\theta)}))-\nabla\!_{y}\!\mathcal{V}_{T}(y(t;y_0,u_{v(\theta)}),u_{v(\theta)}\big)^\top\!\! \\[1.5ex] \displaystyle
\qquad \qquad BB^{\top}\nabla \psi(y(t;y_0,u_{v(\theta)})) dtdy_0
+\alpha D\mathcal{P}(\theta)\cdot\vartheta=0,
\label{LearningProblemOpt:optimalityCond}
\end{array}
\eeq
where $\psi=\left( D_{\theta}v\cdot\vartheta \right)$, and $\vartheta\in \Theta$ is arbitrary. We note that \eqref{LearningProblemOpt:optimalityCond} also arises as the optimality conditions of the regression problem given by minimizing over $\theta\in\Theta$ the following expression
\begin{equation*}
\frac{1}
{2\beta |\omega|}\int_{\omega}\int_{0}^{T}|B^{\top}(\nabla_y v(\vartheta)(y(t;y_0,u_{v(\theta)}))\!-\!\nabla\!_{y}\mathcal{V}_{T}(y(t;y_0,u_{v(\theta)}),u_{v(\theta)}))|^2dtdy_0+ \alpha\mathcal{P}(\vartheta)
\end{equation*}

It is noteworthy to observe that this problem is similar to the problem arising in \Cref{Problem:MachLearningProblem1}, replacing $V$ by $\mathcal{V}_{T}$ and $y^{*}$ by $y(\cdot;\cdot,u_{v(\vartheta)})$. This implies that if $\theta^{*}\in\Theta$ is an optimal solution of \Cref{Problem:MachLearningProblem1} and the trajectories $y(\cdot;y_0,u_{v(\theta^{*})})$ are close to $y^{*}(\cdot;\cdot)$ in $W^{1,\infty}((0,T)\times\omega;\R^{d})$, then $v(\theta^{*})$ satisfies approximately  \eqref{LearningProblemOpt:optimalityCond}. This holds in the opposite direction as well, that is, if $\theta^{*}$ is an optimal solution of \Cref{Problem:LearningProblemOpt} then it satisfies approximately the optimality condition associated to \Cref{Problem:MachLearningProblem1}.

\end{rem}
\begin{rem}\label{rem:conn2}
 There is still another important consequence of the optimality conditions of \Cref{Problem:LearningProblemOpt}. In \cite{KuVa} it was proved that for finite horizon problems the optimality condition of this method becomes sufficient if the value function is $C^2$. In the following we will see that a similar situation holds for \Cref{Problem:LearningProblemOpt}. Let us consider $T$ tending to infinity. We formally set  $\mathcal{V}_{\infty}$ as the limit of $\mathcal{V}_{T}$ and  $\mathcal{O}_{\infty}$ as the superior limit of $\mathcal{O}_{T}$ as $T$ tends to infinity.

Let $\theta^{*}\in\mathcal{O}_{\infty}$ be such that it satisfies \eqref{LearningProblemOpt:optimalityCond} and set $y(t;y_0)=y(t;y_0,u_{v(\theta^{*})})$ for $(t,y_0)\in(0,\infty)\times \omega$ . For $T>0$, by \eqref{def:adjoint:CalV:v} and \eqref{def:mathcalVGrad} we deduce that
\begin{equation*}
\begin{array}{l}
     \displaystyle \ell(y(t;y_0))+\frac{1}{2\beta}|B^{\top}\nabla v(\theta^*)(y(t;y_0))|^{2}+\nabla\mathcal{V}_{T}(y(t;y_0))^{\top} \\[1.5ex]
     \qquad \qquad \quad \cdot(f(y(t;y_0))-\frac{1}{\beta}BB^{\top}\nabla v(\theta^*)(y(t;y_0)))
     \displaystyle\\[1.5ex]
     = \ell(y(T;y_0))+\frac{1}{2\beta}|B^{\top}\nabla v(\theta^*)(y(T;y_0))|^{2}.
\end{array}
\end{equation*}
 Further assuming that
\beq \lim_{t\to\infty}\ell(y(t;y_0))+\frac{1}{2\beta}|B^{\top}\nabla v(\theta^*)(y(t;y_0))|^{2}=0,
\label{OptCond:stab}\eeq
and letting $T$ tend to infinity we obtain
\begin{align*}
\ell(y(t;y_0))+\frac{1}{2\beta}|B^{\top}\nabla v(\theta^*)(y(t;y_0))|^{2}+\nabla\mathcal{V}_\infty(y(t;y_0),u_{v(\theta^*)})^{\top} \\[1.5ex]
\qquad \cdot(f(y(t;y_0))-\frac{1}{\beta}BB^{\top}\nabla v(\theta^*)(y(t;y_0)))=0.\label{OptCond:H1}
\end{align*}
If there exists $\hat{\vartheta}\in \Theta$ such that $D_\theta v(\theta^{*})\cdot\hat{\vartheta}= v(\theta^*)-\mathcal{V}_\infty(\cdot,u_{v(\theta^{*})})$, then plugging $\hat{\vartheta}$ in \eqref{LearningProblemOpt:optimalityCond} leads to
\beq B^{\top}\nabla v(\theta^{*})(y(t;y_0))=B^{\top}\nabla \mathcal{V}_{\infty}(y(t;y_0),u_{v(\theta^*)}) \mbox{ for all }(t,y_0) \in (0,\infty)\times\omega.\label{OptCond:ControlEq}\eeq
 We note that if $D_\theta v(\theta^{*})$ is surjective then the existence of such a $\hat{\vartheta}$ is equivalent to  assuming that $\mathcal{V}_\infty(\cdot,u_{v(\theta^{*})})\in\Theta$. Combining \eqref{OptCond:ControlEq} and \eqref{OptCond:H1} we obtain
\beq
\begin{array}l
\ell(y(t;y_0))+\frac{1}{2\beta}|B^{\top}\nabla \mathcal{V}_{\infty}(y(t;y_0),u_{v(\theta^*)})|^{2}\\[1.5ex]
\quad +\nabla\mathcal{V}_\infty(y(t;y_0),u_{v(\theta^*)})^{\top}\cdot(f(y(t;y_0))-\frac{1}{\beta}BB^{\top}\nabla \mathcal{V}_{\infty}(y(t;y_0),u_{v(\theta^*)}))=0,\label{OptCond:H2}
\end{array}
\eeq
which implies that $\mathcal{V}_{\infty}(\cdot,u_{v(\theta^*)})$ satisfies equation \eqref{HJBeq}  on the set $\{ y(t;y_0) \,|\, (t,y_0) \in (0,\infty)\times \omega\}$.
 Further by the verification theorem (see
Theorem I.7.1 in \cite{FlemSon}), if $\omega$ is large enough in the sense  that  $\{ y(t;y_0) \,|\, (t,y_0) \in (0,\infty)\times \omega\} \supset \{ y^*(t;y_0) \,|\, (t,y_0) \in (0,\infty)\times \omega\}$, where $y^*$ is optimal for  \eqref{ControlProblem},
then  $\mathcal{V}_{\infty}(\cdot,u_{v(\theta^{*})})=V$. In this case $u_{v(\theta^{*})}$ provides an optimal feedback law for \eqref{ControlProblem} and it is an optimal solution of \Cref{Problem:LearningProblemOpt}.
\end{rem}

\section{Obstacle problem}
\label{sec:example}
In this section we introduce a parameterized family of control problems with different degrees of smoothness of the value function depending on a penalty parameter. This example was already introduced in \cite{KuVa2}, for that reason we will follow the same definition.

For $\gamma\in [0,\infty)$, we consider the  control problem
\beq \min_{\begin{array}{c}
     u\in L^{2}((0,\infty);\R^{2}),\\
      y'=u,\ y(0)=y_0,
\end{array}} \int_{0}^{\infty} \ell_\gamma(y(t))dt+\frac{\beta}{2}\int_{0}^{\infty}|u(t)|^2 dt,
\label{Example:controlproblem}\eeq
where  the running cost for the state variable is given by
\begin{equation*}
\ell_\gamma(y) = \frac{1}{2}|y|^2\left(1+\gamma\psi\left(\frac{|y-z|}{\sigma}\right)\right),
\text{ with  }
\psi(s)=\left\{\begin{array}{ll}
    \exp\left(-\frac{1}{1-s^2}\right) & \mbox{ if }|s|<1 \\
     0& \mbox{ if }|s|\geq 1,
\end{array}\right.
\end{equation*}
and  $z\in \R^{2}$ satisfies $z_{1}< -\sigma$, $z_2=0$,  for some $\sigma >0$.

 The value function of this problem is denoted  by $V_\gamma$. In \cite{KuVa2},  local Lipschitz regularity of $V_\gamma$ for all $\gamma>0$ and its lack of differentiability for $\gamma$ sufficiently large were proved. Here we will prove that for $\gamma$ close to $0$ the function $V_{\gamma}$ is in $C^{\infty}(\R^2)$. For this purpose we define
\beq \gamma_s=\frac{1}{\sup_{y\in B(z,\sigma) }\left\{2|y|\cdot |\nabla \tilde{\psi} (y)|+\frac{1}{2}|y|\cdot|\nabla ^2\tilde{\psi} (y)|\right\}} \eeq
 with $\tilde{\psi}(y)=\psi\left(\frac{|y-z|}{\sigma}\right)$.
 \begin{prop}
     \label{prop:smoothness} Let $\gamma\in [0,\gamma_s)$. The value function $V_{\gamma}$ of \eqref{Example:controlproblem} is $C^\infty(\R^2)$.
 \end{prop}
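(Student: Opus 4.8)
The plan is to reduce the smoothness of $V_\gamma$ to the \emph{convexity} of the running cost $\ell_\gamma$, which is exactly what the threshold $\gamma_s$ controls, and then to upgrade the regularity by transporting the smoothness available at the origin along the optimal rays. First I would compute the Hessian of $\ell_\gamma=\frac12|y|^2(1+\gamma\tilde\psi)$ explicitly. With $q(y)=\frac12|y|^2$, so that $\nabla q=y$ and $\nabla^2 q=I$, the product rule gives
\[
\nabla^2\ell_\gamma(y)=(1+\gamma\tilde\psi(y))\,I+\gamma\big(y\otimes\nabla\tilde\psi(y)+\nabla\tilde\psi(y)\otimes y\big)+\tfrac{\gamma}{2}|y|^2\,\nabla^2\tilde\psi(y).
\]
Since $\tilde\psi\ge 0$ the first term dominates $I$, whereas the operator norms of the two remaining terms are controlled by $\gamma$ times $|\nabla\tilde\psi|$ and $|\nabla^2\tilde\psi|$ (using $|a\otimes b+b\otimes a|\le 2|a|\,|b|$). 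As $\tilde\psi$ and its derivatives vanish outside $\overline{B(z,\sigma)}$, one is led to a bound of the form $\xi^\top\nabla^2\ell_\gamma(y)\,\xi\ge 1-\gamma\,\Phi(y)$ for every unit vector $\xi$, where $\Phi$ is precisely the expression whose supremum over $B(z,\sigma)$ defines $\gamma_s^{-1}$. Thus for $\gamma<\gamma_s$ the right-hand side is strictly positive and, by compactness of $\overline{B(z,\sigma)}$, bounded below by some $c\,|\xi|^2$ with $c>0$; hence $\ell_\gamma$ is smooth, coercive and uniformly convex, with its unique minimiser $\ell_\gamma=0$ attained only at $y=0$.

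Next I would exploit that the dynamics $y'=u$ make the state $y(t)=y_0+\int_0^t u(s)\,ds$ an affine function of the control. Consequently the reduced cost $J_{y_0}(u)=\int_0^\infty\big(\ell_\gamma(y)+\tfrac\beta2|u|^2\big)\,dt$ is the sum of a convex term (a convex function composed with an affine map) and the strongly convex term $\tfrac\beta2\|u\|_{L^2}^2$, so it is strongly convex in $u$; therefore the optimal control, and with it the optimal trajectory, is unique for every $y_0$. Moreover $J_{y_0}(u)$ is \emph{jointly} convex in $(y_0,u)$, so its partial infimum $V_\gamma$ is finite and convex on $\R^2$. Since $V_\gamma$ is also semiconcave for every $\gamma>0$ (as recalled above from \cite{KuVa2}), being simultaneously convex and semiconcave forces $V_\gamma\in C^{1,1}_{\mathrm{loc}}(\R^2)$. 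Finally, the Hamilton--Jacobi equation \eqref{HJBeq} (here $f=0$, $B=I$) reads $|\nabla V_\gamma(y)|^2=2\beta\,\ell_\gamma(y)$, so $\nabla V_\gamma$ vanishes only at the origin and the equation is non-degenerate elsewhere.

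It remains to bootstrap from $C^{1,1}$ to $C^\infty$, which I would carry out by the method of characteristics for \eqref{HJBeq}. Near the origin $\tilde\psi$ vanishes, the problem is the linear--quadratic regulator, and $V_\gamma(y)=\tfrac{\sqrt\beta}{2}|y|^2$ is explicitly smooth; equivalently, the graph of $\nabla V_\gamma$ is the stable manifold of the characteristic flow $\dot y=-\tfrac1\beta p,\ \dot p=-\nabla\ell_\gamma(y)$ at the hyperbolic equilibrium $(0,0)$, whose linearisation reproduces the Riccati solution $p=\sqrt\beta\,y$. Because $V_\gamma$ is convex, $\nabla V_\gamma$ is single-valued and the optimal rays do not cross; the smoothness of $\ell_\gamma$ together with standard smooth dependence for the characteristic system then propagates the regularity along the rays, giving $V_\gamma\in C^\infty(\R^2\setminus\{0\})$, while smoothness at the origin is furnished by the regulator representation. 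I expect the main obstacle to be exactly this last, global step: one must verify that the stable manifold remains a \emph{graph over all of} $\R^2$, i.e. that the optimal rays foliate the plane without conjugate points or crossings. It is here that the convexity $\gamma<\gamma_s$ is indispensable, since it is precisely the loss of convexity for large $\gamma$ that produces the crossing rays responsible for the non-differentiability of $V_\gamma$ established in \cite{KuVa2}.
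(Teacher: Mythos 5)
Your first half coincides with the paper's: the Hessian computation for $\ell_\gamma$, the resulting uniform positive definiteness of $\nabla^2\ell_\gamma$ for $\gamma<\gamma_s$, and the uniqueness of the optimal control via convexity of the reduced cost under the affine dynamics are exactly the opening of the paper's argument (cf.\ \eqref{theo:smoothness:positivedef}). From there the two routes diverge. The paper never passes through $C^{1,1}$ regularity or characteristics at all: it writes the Pontryagin optimality system as $\Phi(y,u,p,y_0)=0$ on $X=H^1\times L^2\times H^1$, shows that $D_{y,u,p}\Phi$ at the optimal triple is invertible because the linearized equation is the optimality system of an auxiliary linear--quadratic problem \eqref{Example:LinearizedProblem} that is uniquely solvable thanks to $\nabla^2\ell_\gamma>0$, and then invokes the implicit function theorem to get $C^\infty$ dependence of $(y,u,p)$ on $y_0$, whence $V_\gamma\in C^\infty$ by the chain rule. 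The same convexity input is used, but to control the linearized two-point problem globally in time rather than to establish convexity of $V_\gamma$ itself.

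The genuine gap in your version is the final bootstrap from $C^{1,1}_{\mathrm{loc}}$ to $C^\infty$. You correctly obtain convexity of $V_\gamma$ (partial infimum of a jointly convex functional) and, combined with the semiconcavity from \cite{KuVa2}, $C^{1,1}_{\mathrm{loc}}$ regularity; but the step ``smooth dependence for the characteristic system propagates regularity along the rays'' is precisely the assertion that the stable manifold of $(0,0)$ for $\dot y=-\tfrac1\beta p$, $\dot p=-\nabla\ell_\gamma(y)$ projects diffeomorphically onto the $y$-plane along the graph of $\nabla V_\gamma$, i.e.\ that there are no conjugate points. You name this as ``the main obstacle'' and then do not close it; saying that convexity is ``indispensable'' for it is not an argument. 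The gap is in fact fillable from what you already have: the graph of the Lipschitz map $\nabla V_\gamma$ is a two-dimensional set contained in the smooth two-dimensional stable manifold $W^s$, hence (invariance of domain) relatively open in it, and the Lipschitz bound $|p(s)-p(0)|\le L|y(s)-y(0)|$ along any curve in $W^s$ through a graph point rules out vertical tangent vectors, so the projection onto the $y$-coordinates is a local diffeomorphism there and $\nabla V_\gamma$ inherits the smoothness of $W^s$. Without some such transversality argument the proof is incomplete; with it, your route is a legitimate alternative, at the price of invoking semiconcavity, the stable-manifold theorem, and the LQR identification near the origin (which itself needs the invariance of $B(0,|z|-\sigma)$ under optimal trajectories from \cite{KuVa2}), all of which the paper's function-space implicit function theorem argument avoids.
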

  \begin{proof}
     By \cite{KuVa2} we already know that for all $y_0\in\R^{d}$ and $\gamma\in[0,\infty)$ problem \eqref{Example:controlproblem} has at least one solution and  $V_{\gamma}$ is locally Lipschitz continuous. Additionally, we observe that
     $$ \nabla^{2}\ell_{\gamma}(y)=I(1+\gamma\tilde{\psi}(y))+2\gamma y\otimes \nabla\tilde{\psi}(y)+\gamma\frac{|y|^2}{2} \nabla ^2\tilde{\psi}(y)$$
     where $I$ is the $2\times 2$ identity matrix. Multiplying both sides of the previous equality by $x\in \R^{2}\setminus{(0,0)}$ we see that
     $$ x^{\top}\cdot \nabla^{2}\ell_{\gamma}(y) \cdot x\geq \left((1+\gamma\tilde{\psi}(y))-2\gamma|y|\cdot |\nabla \tilde{\psi}(y)|-\gamma\frac{|y|^2}{2}\cdot|\nabla^2 \tilde{\psi}(y)|\right)|x|^2.$$
     Using that the support of $ \tilde{\psi}$ is $B(z,\sigma)$ in the previous inequality, we obtain for $\gamma\in [0,\gamma_s)$ that
     \beq x^{\top}\cdot \nabla^{2}\ell_{\gamma}(y) \cdot x\geq \left(1-\gamma \sup_{y\in B(z,\sigma) }\left\{2|y|\cdot |\nabla \tilde{\psi} (y)|+\frac{1}{2}|y|\cdot|\nabla ^2\tilde{\psi} (y)|\right\}\right)|x|^2>0,\label{theo:smoothness:positivedef}\eeq
     which implies that $\ell_{\gamma}$ is a strictly convex function. Due to the linearity of the state equation of \eqref{Example:controlproblem}, we also get that the objective function of \eqref{Example:controlproblem} is convex and consequently the optimal control of \eqref{Example:controlproblem} is unique for $\gamma\in [0,\gamma_s)$.

     In the following, we will use \eqref{theo:smoothness:positivedef} to prove that $V_{\gamma}$ is $C^2(\R^2)$ by means of the implicit function theorem. For this purpose we define the function $\Phi: X\times \R^2\mapsto Y$ where $X=H^{1}((0,\infty);\R^2)\times L^{2}((0,\infty);\R^2)\times H^{1}((0,\infty);\R^2)$, and $Y=L^{2}((0,\infty);\R^2)\times L^{2}((0,\infty);\R^2)\times\R^2\times L^{2}((0,\infty);\R^2)$  by
     \beq
     \Phi(y,u,p,y_0)=\left(\begin{array}{l}
          -p'+\nabla \ell_{\gamma}(y) \\
          y'-u\\
          y(0)-y_0\\
          \beta u- p
     \end{array}\right).
     \label{theo:smoothness:Phidef}
     \eeq
      We note that $\Phi(y,u,p,y_0)=0$ is  the optimality condition for \eqref{Example:controlproblem}. Clearly the function $\Phi$ is of class $C^{\infty}$ and its differential with respect to its first three variables is given by
     $$D_{y,u,p}(y,u,p,y_0)\cdot (\delta_y,\delta_u,\delta_p)=\left(\begin{array}{l}
          -\delta_p'+\nabla^2 \ell_{\gamma}(y)\delta_y \\
          \delta_y'-\delta_u\\
          \delta_y(0)\\
          \beta\delta_u- \delta_p
     \end{array}\right)$$
     for  $((y,u,p),y_0)\in X\times \R^2 $ and
      $(\delta_y,\delta_u,\delta_p)\in X$. In order to use the implicit function theorem we need to prove that for $u^{*}$ the optimal solution of \eqref{Example:controlproblem}, $y^{*}$ its optimal trajectory and $p^{*}$ the respective adjoint state, the partial sub-differential $D_{y,u,p}(y^{*},u^{*},p^{*})$ is invertible , i.e., for all $(z_p,z_y,z_{y_0},z_u)\in Y$ there exist only one  $(\delta_y,\delta_u,\delta_p)\in X$ such that
      $$ D_{y,u,p}(y^*,u^*,p^*)\cdot (\delta_y,\delta_u,\delta_p)= (z_p,z_y,z_{y_0},z_u).$$
      We note that this equation is equivalent to the optimality conditions of the following linear-quadratic problem
      \beq
      \left\{
      \begin{array}l
      \min
      \int_{0}^{\infty}\left(\frac{1}{2}\delta_y^{\top}(t)\nabla^2\ell_{\gamma}(y^*(t))\delta_y(t)-z_p(t)\cdot\delta_y(t)
      +\frac{\beta}{2}|\delta_u(t)|^2-\delta_u(t)\cdot z_u(t)\right)dt,\\[1.5ex]
      \text{where } \delta_u\in L^{2}((0,\infty);\R^2), \text{and }
      \delta_y'=\delta_u+z_y,\ \delta_y(0)=z_{y_0}.
      \label{Example:LinearizedProblem}
      \end{array} \right.
      \eeq  
      Consider $\tilde{u}$ the optimal solution of \eqref{Example:controlproblem} with initial condition $z_{y_0}$ and $\tilde{y}$ the corresponding trajectory. We note that using $\delta_u=\tilde{u}-z_y $ we obtain that $\delta_y=\tilde{y}$ satisfies $\delta_y'-\delta_u=z_y$ and $$\left|\int_{0}^{\infty}\left(\frac{1}{2}\delta_y^{\top}(t)\nabla^2\ell_{\gamma}(y(t))\delta_y(t)-z_p(t)\cdot\delta_y(t)+\frac{\beta}{2}|\delta_u(t)|^2-\delta_u(t)\cdot z_u(t)\right)dt\right|<+\infty.$$
     This together with the positive definiteness of $\nabla^2\ell_{\gamma}$ and weak lower semi-continuity of the objective function  imply that \eqref{Example:LinearizedProblem} has a unique optimal solution. Then by the implicit function theorem there exists an open neighborhood $\Gamma\subset\R^2$ of $y_0$ and functions  $\hat{u}:\Gamma\mapsto L^{2}((0,\infty);\R^2)$, $\hat{y}:\Gamma\mapsto H^{1}((0,\infty);\R^2)$ and $\hat{p}:\Gamma\mapsto H^{1}((0,\infty);\R^2)$ of class $C^\infty$, such that for all $z_0\in \Gamma$ we have that  $\Phi(y(z_0),u(z_0),p(z_0),z_0)=0 $.Since for $\gamma\in [0,\gamma_s)$  problem \eqref{Example:controlproblem} is strictly convex, we have that $u(z_0)$ is  its optimal solution, $y(z_0)$ its optimal trajectory, and $p(z_0)$ the corresponding adjoint state. Hence, $V_{\gamma}(z_0)=\int_{0}^{\infty}\left(\ell_{\gamma}(y(z_0))+\frac{\beta}{2}|u(z_0)|^2\right) dt$ and by the chain rule we obtain that $V_{\gamma}$ is $C^{\infty}$ in $\Gamma$, Since $y_0$ is arbitrary we deduce that $V_{\gamma}$ is $C^{\infty}(\R^2)$. 
 \end{proof}
 As  mentioned before, in \cite{KuVa2} it was proved that for $\gamma$ large enough the value function $V_\gamma$ is non differentiable. Specifically, for each such $\gamma$  there exists $x_{\gamma}$ such that $V_{\gamma}$ is non-differentiable in $\{(x,0)\in \R^2:x>x_{\gamma}\}$. However, it was also proved in \cite{KuVa2} that for all $\gamma>0$ \Cref{hypo:stab} and \Cref{hypo:approx:vf} hold. Consequently, by using a sequence of setting satisfying \Cref{hypo:approx:c2} we obtain by means of \Cref{theo:conv:LearningProblemOpt} that convergence in the sense of \eqref{ConvergenceDef} holds for  \Cref{Problem:LearningProblemOpt}. On the other hand, for Methods \ref{Problem:MachLearningProblem1}  and \ref{Problem:MachLearningProblem2}  and  $\gamma\in [0,\gamma_s)$  we obtain that
  \Cref{hypo:approx:restricc2} holds provided the sequence of settings is chosen appropriately.

This together with  \Cref{prop:smoothness} allows us to utilize  problem \eqref{Example:controlproblem} to  numerically investigate the behavior of Methods \ref{Problem:LearningProblemOpt} - \ref{Problem:MachLearningProblem2} for different choices of $\gamma$, which  account for different degrees of smoothness of $V_{\gamma}$.

We close this section by establishing another important property of the value function.

 \begin{prop}
     \label{prop:scc}
The value function associated to \eqref{Example:controlproblem} is semiconcave on every compact subset of $\R^2$.
\end{prop}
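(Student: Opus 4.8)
The plan is to establish the standard second-difference characterization of semiconcavity directly from the variational definition of $V_\gamma$, exploiting that the dynamics $y'=u$ are linear and fully actuated, so that a perturbation of the initial datum can be corrected in finite time at controlled cost. Fix a compact set $K\subset\R^2$. First I would record the a priori bounds: by \cite{KuVa2} an optimal control $u^*$ with trajectory $y^*$ exists for every $y_0\in K$, and since $V_\gamma$ is locally bounded on the compact $K$ and $\ell_\gamma(y)\ge\frac12|y|^2$, the quantities $\int_0^\infty|u^*|^2\,dt$ and $\int_0^\infty|y^*|^2\,dt$ are bounded uniformly in $y_0\in K$. Together with $y^*\in H^1$ this yields a uniform radius $R>0$ with $\|y^*\|_{L^\infty}\le R$ for every optimal trajectory issuing from $K$. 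Because $\psi$ is the standard $C^\infty$ bump, $\ell_\gamma\in C^\infty(\R^2)$, so $M:=\sup_{|y|\le R+1}\|\nabla^2\ell_\gamma(y)\|$ is finite.

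Next I would construct competitors for the perturbed initial data. Given $y_0$ with $y_0\pm h\in K$ and $|h|\le 1$, pick an optimal $u^*$ for $y_0$, a correction horizon $\tau>0$, and set $\xi(t)=h(1-t/\tau)$ on $[0,\tau]$ with $\xi\equiv 0$ afterwards. Using the control $u^*-h/\tau$ on $[0,\tau]$ (and $u^*$ afterwards) from $y_0+h$ produces the admissible trajectory $y^*+\xi$, which rejoins $y^*$ exactly at time $\tau$; symmetrically, the control $u^*+h/\tau$ from $y_0-h$ produces $y^*-\xi$. Since $|y^*\pm\xi|\le R+1$, both competitors remain in the region where the Hessian bound $M$ is valid.

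The estimate then follows by summing the three cost functionals. The control costs combine to $\frac{\beta}{2}\int_0^\tau\big(|u^*-h/\tau|^2+|u^*+h/\tau|^2-2|u^*|^2\big)\,dt=\beta|h|^2/\tau$, while beyond $\tau$ all three trajectories coincide so the tails cancel exactly; on $[0,\tau]$ a second-order Taylor expansion gives $\ell_\gamma(y^*+\xi)+\ell_\gamma(y^*-\xi)-2\ell_\gamma(y^*)\le M|\xi|^2$, whence the running-cost contribution is at most $M\int_0^\tau|h|^2(1-t/\tau)^2\,dt\le M|h|^2\tau$. Using optimality of $u^*$ for $y_0$ and the minimality defining $V_\gamma$ at $y_0\pm h$, I obtain
$$V_\gamma(y_0+h)+V_\gamma(y_0-h)-2V_\gamma(y_0)\le \frac{\beta|h|^2}{\tau}+M|h|^2\tau,$$
and choosing $\tau=1$ (or optimizing in $\tau$) yields $V_\gamma(y_0+h)+V_\gamma(y_0-h)-2V_\gamma(y_0)\le C_K|h|^2$ with $C_K$ depending only on $K$ and $\gamma$. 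This is precisely semiconcavity of $V_\gamma$ on $K$.

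I expect the main obstacle to be the infinite horizon: the naive choice of reusing $u^*$ unchanged fails, because the perturbed trajectory $y^*+h$ does not decay to the equilibrium and $\ell_\gamma(y^*+h)\approx\frac12|h|^2$ makes the running-cost integral diverge. The finite-time correction $\xi$ resolves this by paying an extra control cost of order $|h|^2/\tau$ to rejoin the optimal trajectory, which balances against the running-cost defect of order $|h|^2\tau$. The only remaining care is the uniform-in-$K$ control of $R$ and $M$, which follows from local boundedness of $V_\gamma$ on the compact $K$ together with the smoothness of $\ell_\gamma$.
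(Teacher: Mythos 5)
Your proof is correct, but it takes a genuinely different route from the paper. The paper exploits the exponential stability of the optimal trajectories to find a time $\hat T$ after which every optimal trajectory from $C$ has entered the ball $B(0,|z|-\sigma)$, where $V_\gamma$ coincides with the smooth function $V_0(y)=\frac{\sqrt\beta}{2}|y|^2$; it then rewrites \eqref{Example:controlproblem} as an equivalent finite-horizon problem on $[0,\hat T]$ with smooth terminal penalty $\hat V_0$ and an inactive control constraint, and invokes the general semiconcavity theorem of Cannarsa--Sinestrari for finite-horizon problems. You instead verify the second-difference inequality $V_\gamma(y_0+h)+V_\gamma(y_0-h)-2V_\gamma(y_0)\le C_K|h|^2$ directly, using the fully actuated linear dynamics $y'=u$ to steer the perturbed initial data back onto the reference optimal trajectory in time $\tau$ at control cost exactly $\beta|h|^2/\tau$, with the running-cost defect bounded by $M|h|^2\tau$ via the Hessian of $\ell_\gamma$ on a uniform bounded tube, and with exact cancellation of the tails beyond $\tau$. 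Your argument is self-contained and avoids both the finite-horizon reduction and the external citation; notably it does not need the asymptotic characterization $V_\gamma=V_0$ near the origin or the exponential stability of optimal trajectories, only a uniform $L^\infty$ bound on optimal trajectories from $K$ (which you correctly derive from local boundedness of $V_\gamma$ and $\ell_\gamma(y)\ge\frac12|y|^2$). What the paper's route buys in exchange is robustness: the reduction-plus-citation strategy extends to dynamics that are not fully actuated, where your explicit steering construction (which is essentially the standard finite-horizon semiconcavity proof inlined and adapted to infinite horizon by tail cancellation) would no longer give an $O(|h|^2/\tau)$ correction cost. Two cosmetic points: the second-difference characterization requires the segment $[y_0-h,y_0+h]$ to lie in the set, so you should run the argument on a compact convex set containing $K$ (e.g.\ its convex hull); and the $L^\infty$ bound on $y^*$ follows from the state equation via $\frac{d}{dt}|y^*|^2=2y^*\cdot u^*$ rather than from $H^1$ membership alone, though both are available.
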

\begin{proof}
Let $C\subset \R^2$ be compact and denote by $(y^*(\cdot;y_0, u^*(\cdot;y_0)))$ the optimal state-control pairs with $y_0\in C$.
Since the optimal trajectories associated to \eqref{Example:controlproblem} are globally exponentially stable there exists $\hat T$ such that $y^*(t;y_0) <|z| - \sigma$  for all $ (t,y_0) \in [\hat T, \infty) \times C$,  and consequently $V_\gamma(y_0) = V_0(y_0)=\frac{\sqrt{\beta}}{2} |y_0|^2$, \cite[Lemma 9.2, Lemma 9.3]{KuVa2}. Here $ B(0,|z| - \sigma)$
denotes the open ball with center at the origin and radius $|z| - \sigma$.
Further $|u^*(t;y_0)| = \frac{2}{\beta} \ell_\gamma(y^*(t;y_0))$, \cite[Proposition 9.1]{KuVa2}, and thus for a constant $K$ we have $|u^*(t;y_0)| \le K$,  for all $(t,y_0)\in [0,\infty)\times C$.

 Let us consider
 \beq \min_{\begin{array}{c}
     u\in L^{2}((0,\hat T);\R^{2}), \, |u(t)|\le K+1\\
      y'=u,\ y(0)=y_0 \in C
\end{array}} \int_{0}^{\hat T} \ell_\gamma(y(t))dt+\frac{\beta}{2}\int_{0}^{\hat T}|u(t)|^2 dt + \hat V_0(y(\hat T,y_0)),
\label{Example:aux_controlproblem}\eeq
where  $\hat V_0$ denotes the extension of $V_0$   outside of $B(0,|z| - \sigma)$ by $\hat V_0(y) = \frac{\sqrt{\beta}}{2} |y|^2$.

For  $y_0\in C$ this problem is equivalent to \eqref{Example:controlproblem}. Here we use Bellman's principle, the fact that
$y^*(\hat T;y_0) \in B(0,|z| - \sigma)$, and  that
 $|u^*|_{L^\infty(0,\infty)} \le K$, so that the control constraint in \eqref{Example:aux_controlproblem} is not active at the solutions. The value function restricted to $C$ coincides for these two problems. We next utilize that  \eqref{Example:aux_controlproblem} can be considered as finite horizon problem with terminal penalty given by $\hat V_0$. It then follows from \cite[Theorem 7.4.11]{Cannarsa2004} that the value function for \eqref{Example:aux_controlproblem} is semiconcave on $C$, provided that the assumptions of that  theorem are satisfied. In this respect it suffices to recall that any function which is $C^1(A)$ in an open set $A$ containing $C$ is semiconcave on $C$, \cite[Proposition 2.1.2]{Cannarsa2004}. This is directly applicable for $\ell\in C^1(\R^2)$ as well as $\hat V_0\in C^\infty(\R^2)$. The remaining assumptions on the dynamical system and the running cost are trivially satisfied.
\end{proof}

\section{Numerical experiments}

Numerical tests for Methods \ref{Problem:LearningProblemOpt} and \ref{Problem:MachLearningProblem1} including high-dimensional problems  are available in the literature, we refer to eg. \cite{KW1, Ruthotto, AzKaKK, KuVa2, KuVaWal, Onken, Onken2}. Thus the  strength and validity of them for practical realisation has already been established.
Here we make an effort to compare these
methods with respect to the influence of the regularity of the value function. For this purpose, we employ equation
\eqref{Example:controlproblem} and  recall that the  parameter $\gamma$  acts as a control for the regularity of $V_\gamma$. Thus we are interested whether the performance of  these methods significantly differs with respect to the regularity of the value function. 
 It would be of interest to also compare these methods for high dimensional problems. This, however, is not within the scope of this research.

We  consider   problem \ref{Example:controlproblem} with $z_1=-2$, $\sigma=1$, and set $\omega=(-5,5)\times (-2,2)$. Since the dimensionality  is low,  a regular grid  is used to  approximate the integrals appearing in Methods \ref{Problem:LearningProblemOpt} - \ref{Problem:MachLearningProblem2}. For high dimensional problems this is not feasible and other discretization method should be used instead, as for example Monte-Carlo integration, which is the usual choice for machine learning methods and has been used in \cite{KuVaWal, KuVa}. We
 report on the behavior of the obtained feedback laws for each of the  methods with $\gamma\in\{10^{-3},10^{-2},10^{-1},1,10,10^2,10^3\}$.

In the following we further describe the setting of the methods. For the Banach spaces $\Theta$ and the parametrization we follow  \Cref{rem:example:approx}. Thus the setting depends on two parameters, which are the order $k$ and the degree $n$. Here we consider $k\in\{1,2\}$ and $n\in \{10,20,30,40\}$. Recall that the choice of $k$ connects the smoothness of the value function.
 The penalty function of the setting is $\mathcal{P}(\theta)=\frac{1}{2}\left(\frac{1}{2}|\theta|_2^2+|\theta|_1\right)$ and the our tests were carried out with penalty parameter  $\alpha$ chosen from the set $\{10^{-1},10^{-3},10^{-5},10^{-7},10^{-9}\}$.
 In all the  cases we consider the time horizon with $T=1$.  

The integrals over $\omega$ appearing in the learning problems are approximated over  a regular grid of size $16\times 16$. We call these points the \textbf{training set}. In order to study the performance of the feedback produced by each method we also consider a finer regular grid of size $32\times 32$, which is the \textbf{test set}. In both the training and test sets the performance of each method is evaluated  by the following error measures:
\beq
NMAE_V=\sum_{i=1}^{N}|V(y_0^i)-\mathcal{V}_{T}(y_0^i,v(\theta))|\cdot \frac{1}{\sum_{i=1}^{N}V(y_0^i)},
\eeq
\beq
NMRSE_c=\sum_{i=1}^{N}\norm{u_i^{*}-\hat{u}_i}_{L^{2}((0,T);\R^2)}\cdot \frac{1}{\sum_{i=1}^{N}\norm{u_i^{*}}_{L^{2}((0,T);\R^2)}},
\eeq
where $\{y_0^{i}\}_{i=1}^{n}$ is a set of initial conditions which in this case could be either the training or the test sets, $\{u_i^{*}\}_{i=1}^{N}$ are the optimal controls and $\{\hat{u}_i\}_{i=1}^{N}$ are the controls obtained by applying the respective feedback. For obtaining a reference value for the optimal value function $V$ and the optimal controls for  the chosen  initial conditions we numerically  solve the discretized open loop control problem. For this purpose the $ODEs$ and the integral of the running cost are discretized by an explicit Euler scheme with time step  $\frac{1}{400}$. In accordance  with Theorems \ref{theo:conv:LearningProblemOpt} and \ref{theo:conv:LearningProblemReg} we expect that for small values of  $\gamma$ all three approaches will achieve a good performance since in this case $V_{\gamma}$ is smooth. On the other hand,  $V_{\gamma}$ is non-smooth  for large values of  $\gamma$,   and only Theorem \ref{theo:conv:LearningProblemOpt}  is applicable, which  implies that we only have a guarantee of good performance for Method \ref{Problem:LearningProblemOpt}.

For solving the learning problems we employ a proximal gradient method together with the Barzilai-Borwein step size and a non-linear backtracking line search. This algorithm is described in \cite{KuVaWal} and \cite{KuVa} in the context of \Cref{Problem:LearningProblemOpt}.
Following the notation of Algorithm 1 in \cite{KuVa} we use $k=3$, $tol=10^{-6}$, $\kappa=10^{-3}$ and $\xi=\frac{1}{2}$,  with  the same  tolerances measures as in \cite{KuVa}. Concerning initialization,
for each setting we first solve with $\gamma=10^{-3}$ using as initial guess $\theta^0=0$, and subsequently  for $\gamma>10^{-3}$  the solution corresponding to the previous $\gamma$ value is used  as initial guess. The penalty parameter $\alpha$ was chosen such that it  achieve approximately the best $NMAE_V$ in the training set. The reported results were obtained on a computer with a Xeon E5-2630v3 (2x8Core 2,4Ghz) processor and 8 GB of RAM memory.

\begin{figure}[h!]
         \centering
\foreach \alphai/\alphaj in {0/10^{-3},3/1,6/10^3}{
\begin{subfigure}[b]{0.45\textwidth}
\centering
 \includegraphics[width=\textwidth]{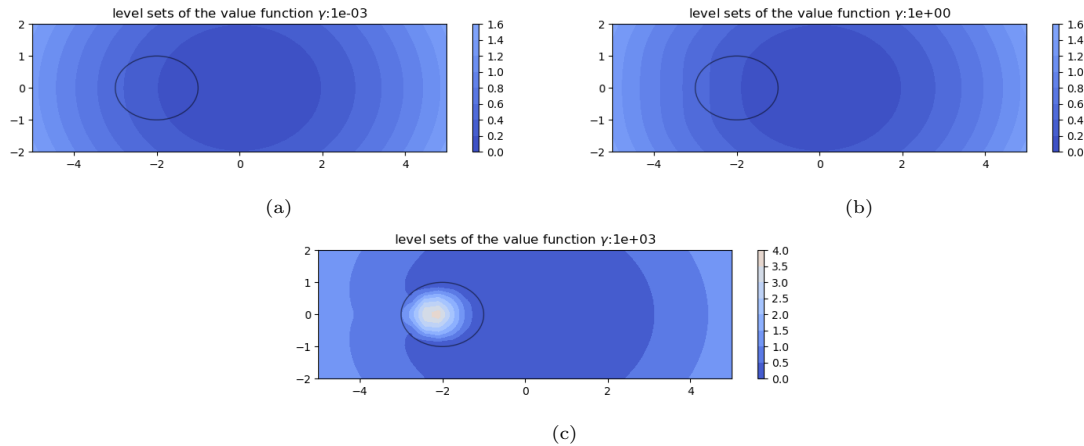}
         \caption{}
         \label{levelSets:\alphai}
\end{subfigure}
}
\caption{Level sets of $V_{\gamma}$, the color of the level sets are in logarithmic scale.}
\label{fig:levelSets}
\end{figure}

We start the discussion of the experiments with the level sets of the value function for selected  values of regularisation parameters $\gamma=10^{-3},0,10^3$.  This is relevant to understand the distribution of the errors for each method which will be reported later on. In Figure \ref{fig:levelSets} the level sets for selected $\gamma$-values  together with the boundary of the obstacle are presented. We observe that on  the right hand side of the $y-$axis the level sets do not change with $\gamma$. This is concordant with the fact that in this region the obstacle has no effect. On the other hand, on the left hand side we observe that in a neighborhood of the $x-$axis the level sets vary from being convex to non-convex. This is consistent  with the fact that on this axis the value function is non-differentiable for large $\gamma$. Further, for $\gamma=10^3$ the value function has a local maximum close to the center of the obstacle. Indeed, since $V_{\gamma}$ converges to $V_{\infty}$ it is expected that $V_{\gamma}$ blows up inside the obstacle as $\gamma$ tends to infinity.
\begin{figure}[h!]
\centering
\begin{subfigure}[b]{0.45\textwidth}
\centering
\includegraphics[width=\textwidth]{"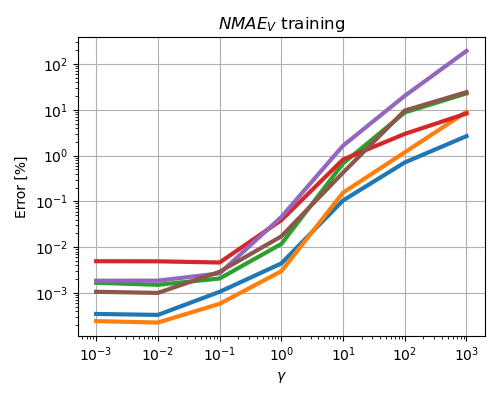"}
\caption{}
\label{fig:NMAE:training}%
\end{subfigure}
\begin{subfigure}[b]{0.45\textwidth}
\centering
\includegraphics[width=\textwidth]{"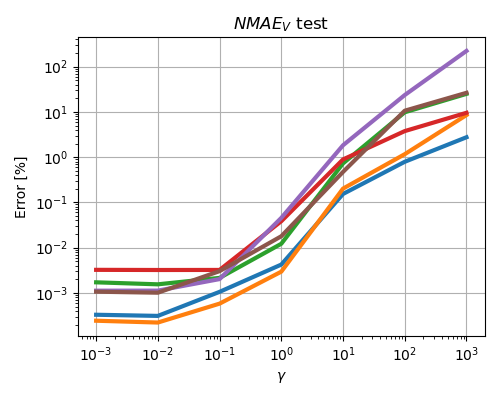"}
\caption{}
\label{fig:NMAE:test}%
\end{subfigure}
\begin{subfigure}[b]{0.45\textwidth}
\centering
\includegraphics[width=\textwidth]{"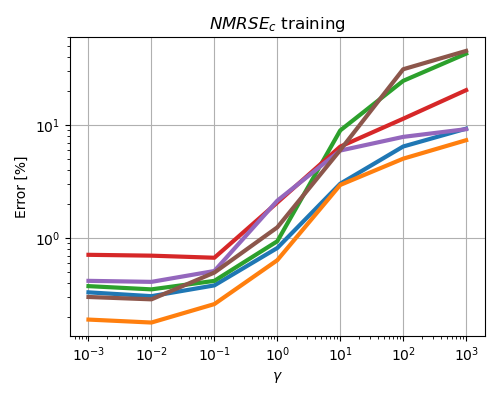"}
\caption{}
\label{fig:NMRS:training}%
\end{subfigure}
\begin{subfigure}[b]{0.45\textwidth}
\centering
\includegraphics[width=\textwidth]{"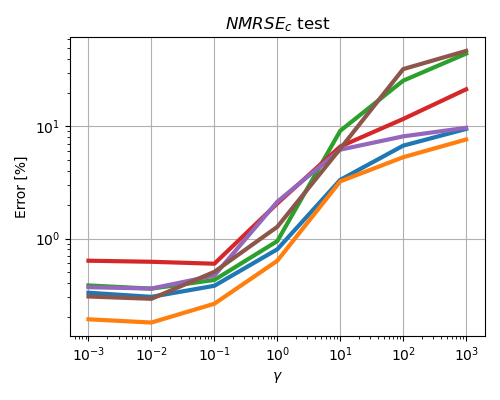"}
\caption{}
\label{fig:NMRS:test}%
\end{subfigure}
\begin{subfigure}[b]{0.45\textwidth}
\centering
 \includegraphics[width=\textwidth]{"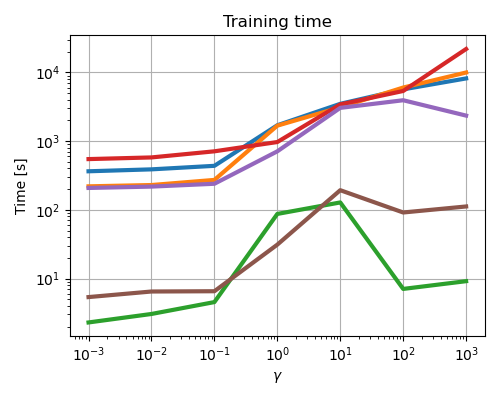"}
\caption{}
\label{fig:time}%
\end{subfigure}
\begin{subfigure}[b]{0.45\textwidth}
\centering
 \includegraphics[width=\textwidth]{"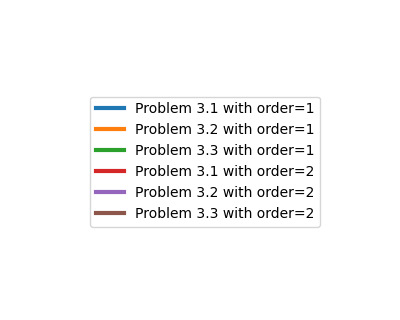"}
\end{subfigure}
\caption{Errors and training times. In all the figures both axes are in logarithmic scale.}
\label{fig:errors}
\end{figure}

\begin{figure}
\centering
\begin{subfigure}[b]{0.45\textwidth}
\centering
 \includegraphics[width=\textwidth]{"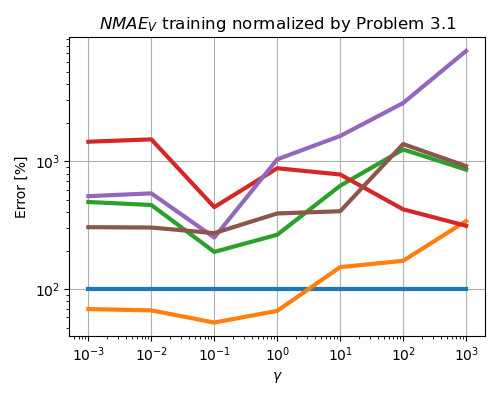"}
 \caption{}
 \label{fig:NMAE:Rel:training}
\end{subfigure}
\begin{subfigure}[b]{0.45\textwidth}
\centering
 \includegraphics[width=\textwidth]{"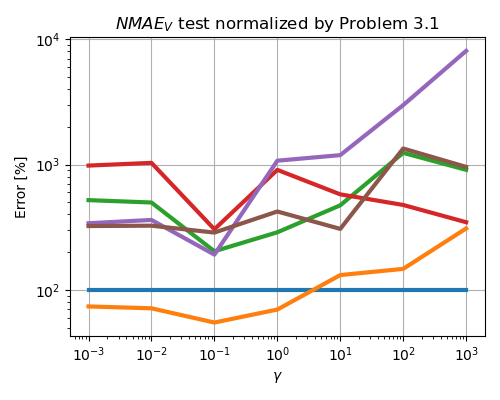"}
\caption{}
\label{fig:NMAE:Rel:test}
\end{subfigure}
\begin{subfigure}[b]{0.45\textwidth}
\centering
 \includegraphics[width=\textwidth]{"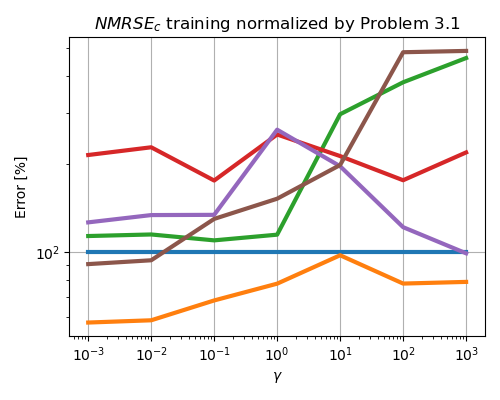"}
 \caption{}
 \label{fig:NMRS:Rel:training}
\end{subfigure}
\begin{subfigure}[b]{0.45\textwidth}
\centering
 \includegraphics[width=\textwidth]{"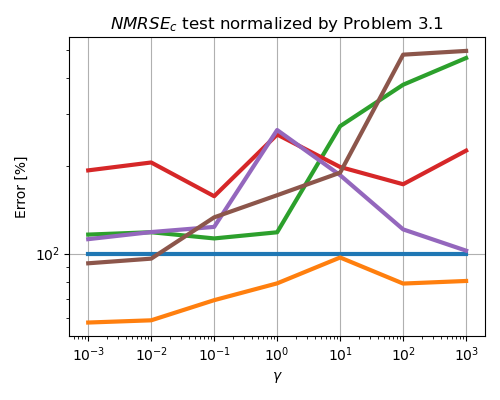"}
\caption{}
\label{fig:NMRS:Rel:test}
\end{subfigure}
\begin{subfigure}[b]{1\textwidth}
\centering
 \includegraphics[width=\textwidth]{"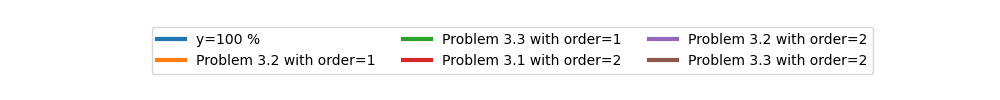"}
 \end{subfigure}
\caption{Errors normalized by the error of Method 3.1. In all the figures both axes are in logarithmic scale.}
\label{fig:errorsRel}

\end{figure}

To compare the performance graphically,  in Figure \ref{fig:errors} the errors and the training time for each method are depicted. We observe that typically $k=1$  achieves a better performance than $k=2$, especially for $\gamma\geq 10$. Additionally, we note that the error increases with $\gamma$. Methods \ref{Problem:LearningProblemOpt} and  \ref{Problem:MachLearningProblem1} with $k=1$ achieved the best performance in both error measures. To take a closer look we turn our attention to Figure \ref{fig:errorsRel} where the errors are normalized by dividing them by the error obtained by using Method \ref{Problem:LearningProblemOpt} with k=1. From this we see that for $\gamma\leq 1$ the method with the best performance in terms of $NMAE_V$ and $NMRSE_c$ control is Method \ref{Problem:MachLearningProblem1} with $k=1$. In contrast, for $\gamma>1$, Method \ref{Problem:LearningProblemOpt} with $k=1$ has the best $NMAE_V$ and moreover the distance with respect to  Method \ref{Problem:MachLearningProblem1} increases. We point out that also for $\gamma>1$ the fact that Method \ref{Problem:MachLearningProblem1} has best $NMRSE_c$ control does not imply the same with respect to $NMAE_V$. Concerning the efficiency of the methods  in \Cref{fig:errors}(\subref {fig:time}) we provide the trainings time for each experiment, that is, the time that it  takes to achieve the stooping criterion. In this figure we see that Method \ref{Problem:MachLearningProblem2} is the one with the shortest training time. For Methods \ref{Problem:LearningProblemOpt} and \ref{Problem:MachLearningProblem1} the training times tend to increase with $\gamma$. This can be  a consequence of decreasing regularity  and the fact that the value function blows up inside the obstacle as $\gamma$ tends to infinity.

\foreach \alphai/\alphaj in {6/10^3}{
\begin{figure}[h!]
         \centering
\foreach \order in {1,2}{
\foreach \problem [count=\xi] in {opt,tra"_"reg,reg}{
\begin{subfigure}[b]{0.3\textwidth}
\centering
         \includegraphics[width=\textwidth,height=\textwidth]{"Graficos/scatter_"\problem"_"\order"_"\alphai".png"}
         \caption{}
         \label{scatter:\alphai:\xi:\order}
\end{subfigure}
}
}
\hfill
\caption{Scatter plots for $\gamma=\alphaj$.}
\label{scatter:\alphai}
\end{figure}
}

\foreach \alphai/\alphaj in {0/10^{-3},3/1,6/10^3}{
\begin{figure}[h!]
         \centering
\foreach \problem [count=\xi] in {opt,tra"_"reg,reg}{
\foreach \order in {1,2}{
\begin{subfigure}[b]{0.45\textwidth}
\centering
         \includegraphics[width=\textwidth]{"Graficos/error_"\problem"_"\order"_"\alphai".png"}
         \caption{}
         \label{error:\alphai:\xi:\order}
\end{subfigure}
}
}
\hfill
\caption{Error distribution for $\gamma=\alphaj$.}
\label{SpatialError:\alphai}
\end{figure}
}

The previous analysis does not address the spatial distribution of the error. This information is given in  Figures \ref{SpatialError:0}-\ref{SpatialError:6} where the distribution of the error between the value functions $V_{\gamma}$ and $\mathcal{V}_{T}(\cdot,\theta^{*})$ is depicted, with  $\theta^{*}$  the solution obtained by the respective experiment. From Figure \ref{SpatialError:0} we observe that the error is uniformly smaller than $10^{-3}$ for $\gamma\leq 10^{-3}$, and we can report that this is also the case as long as $\gamma \le 1$.   Starting from $\gamma=10^{1}$ we see from Figures \ref{SpatialError:3}-\ref{SpatialError:6} that the error in the region left to the $y-$axis tends to increase with $\gamma$. Moreover, the error is higher for the points close to the $x-$axis which are to the left of the $y-$axis and also inside the obstacle than those outside. As explained  before, this can be attributed to the fact that the value function is non smooth on the $x-axis$  for all $x$ left of the obstacle  and that it blows up inside the obstacle as $\gamma$ tends to infinity.

 Figures \ref{scatter:6} show the scatter plot and the dispersion between $V_{\gamma}$ and $\mathcal{V}_{T}(\cdot,\theta^{*})$ evaluated at the test points for $\gamma=10^3$.
In all the cases the points in the interior of the obstacle tend to be further from the identity line than those outside of the obstacle. For Method  \ref{Problem:LearningProblemOpt} this effect is weaker than for the others.

\section{Conclusions}

In this work convergence in the sense of \eqref{ConvergenceDef}  is analyzed for
 three machine learning approaches. They differ with respect to the assumption on the regularity of the value function.
Further, now that more techniques to compute suboptimal feedback controls guided by the Bellman principle and hence the HJB equation,  become available, it is of interest to start  towards the challenging goal of comparing methods. For this purpose we present a family of control problems in   \Cref{sec:example} indexed by a penalty parameter $\gamma>0$ for which the value function is smooth if $\gamma$ close to 0, and while it is  non-differentiable but semi-concave for $\gamma$ large. This family of control problem was used to compare the performance of the three methods. From these experiments it was observed that for $\gamma$ close to 0, which correspond to the case of smooth value functions, Method
\ref{Problem:MachLearningProblem1} has a better performance than the other approaches, but for larger $\gamma$ Method \ref{Problem:LearningProblemOpt} is the one with a better performance. This is coherent with the convergence results.

Further extensions of the work presented here are possible. For instance, the discretized version of the learning problems was not studied. This is particularly relevant for high-dimensional problems, since in this case the discretization is usually carried out by using Monte-Carlo integration.
Problems with constraints on either the state or the control can be considered by first using an exterior penalty method as was used for the construction of problem \eqref{Example:controlproblem}. Finally, the extension of this problem to higher dimension is of interest, because it could help to study the behavior of the learning problems in the context of high-dimensions  and non-smoothness of the value function.

\appendix
\section{Properties of $\mathcal{V}_T$}
\label{Appendix:CalV}
In this section we provide some important result concerning  properties of $\mathcal{V}_{T}$ needed throughout this work. The proofs can be obtained with standard techniques and are therefore not presented here.

\begin{lemma}
    \label{lemma:LipschitzContCalV}
    Let $T>0$ denote a finite time horizon, and let $y_0\in\omega$, $g_1\in C^{2}(\Omega)$, $g_2\in C^{2}(\Omega)$, and $\delta>0$ be such that $(T,u_{g_1},\delta,y_0)$ and $(T,u_{g_2},\delta,y_0)$ \Cref{hypo:stab} holds. Then we have for all $t\in [0,T]$
    \beq
    |y(t;y_0,u_{g_1})-y(t;y_0,u_{g_2})|\leq \frac{|B|\norm{u_{g_1}-u_{g_2}}_{L^\infty(\Omega_\delta;\R^m)}}{a}\left(e^{ta}-1\right)
    \label{lemma:LipschitzContCalV:state}
    \eeq
    and
    \beq
    \begin{array}{ll}
    \displaystyle |\mathcal{V}_{T}(y_0,u_{g_1})-\mathcal{V}_{T}(y_0,u_{g_2})|\\[1.5ex]
    \displaystyle\leq |B|\big[\frac{\norm{\nabla \ell}_{L^{\infty}(\Omega_\delta);\R^d)}}{a} c_T  +\frac{T\beta}{2}\norm{u_{g_1}
    +u_{g_2}}_{L^{\infty}(\Omega_{\delta};\R^d)}\big]\norm{u_{g_1}-u_{g_2}}_{L^\infty(\Omega_\delta;\R^m)},
    \end{array}
    \label{lemma:LipschitzContCalV:value}
    \eeq
    where
    $$ c_T= (\frac{e^{aT}-1}{a}-T)  \text{ and }a=\norm{Df}_{L^{\infty}(\Omega_\delta;\R^{d\times d})}+\frac{|B|^2}{\beta}\norm{\nabla^2 g_2}_{L^{\infty}(\Omega_{\delta};\R^{d\times d})}.$$
\end{lemma}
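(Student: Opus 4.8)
The plan is to establish the state estimate \eqref{lemma:LipschitzContCalV:state} first, by a Gronwall / differential-inequality argument, and then to feed it into a decomposition of the cost difference to obtain \eqref{lemma:LipschitzContCalV:value}. First I would set $y_i(t)=y(t;y_0,u_{g_i})$ and $e(t)=y_1(t)-y_2(t)$, so that $e(0)=0$ and, by \eqref{closedloop},
\[
e'(t)=\big(f(y_1)-f(y_2)\big)+B\big(u_{g_1}(y_1)-u_{g_2}(y_2)\big).
\]
The key manipulation is to split the control term as $u_{g_1}(y_1)-u_{g_2}(y_2)=\big(u_{g_1}(y_1)-u_{g_2}(y_1)\big)+\big(u_{g_2}(y_1)-u_{g_2}(y_2)\big)$. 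Since both trajectories remain in $\Omega_\delta$ by \Cref{hypo:stab}, the first bracket is bounded by $\norm{u_{g_1}-u_{g_2}}_{L^\infty(\Omega_\delta;\R^m)}$, while the second is Lipschitz in the state with constant $\tfrac{|B|}{\beta}\norm{\nabla^2 g_2}_{L^\infty(\Omega_\delta)}$ because $u_{g_2}=-\tfrac1\beta B^\top\nabla g_2$. Together with the local Lipschitz bound $\norm{Df}_{L^\infty(\Omega_\delta)}$ on $f$, this yields the pointwise differential inequality $\tfrac{d}{dt}|e(t)|\le a\,|e(t)|+|B|\,\norm{u_{g_1}-u_{g_2}}_{L^\infty(\Omega_\delta;\R^m)}$, with $a$ exactly as in the statement. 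Comparing $|e|$ with the solution of the scalar linear ODE $z'=az+b$, $z(0)=0$, whose solution is $\tfrac{b}{a}(e^{ta}-1)$, gives \eqref{lemma:LipschitzContCalV:state}.

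For the cost bound I would decompose
\[
\mathcal{V}_{T}(y_0,u_{g_1})-\mathcal{V}_{T}(y_0,u_{g_2})=\int_0^T\big(\ell(y_1)-\ell(y_2)\big)\,dt+\frac{\beta}{2}\int_0^T\big(|u_{g_1}(y_1)|^2-|u_{g_2}(y_2)|^2\big)\,dt.
\]
For the running-cost term I would use $|\ell(y_1)-\ell(y_2)|\le\norm{\nabla\ell}_{L^\infty(\Omega_\delta)}|e(t)|$, insert \eqref{lemma:LipschitzContCalV:state}, and integrate; since $\int_0^T(e^{ta}-1)\,dt=\tfrac{e^{aT}-1}{a}-T=c_T$, the factors $\tfrac1a$ and $|B|$ (from $b$) are produced and this gives precisely the first summand. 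For the quadratic term I would use the algebraic identity $|p|^2-|q|^2=(p-q)\cdot(p+q)$ pointwise along the trajectories, bounding the sum factor by $\norm{u_{g_1}+u_{g_2}}_{L^\infty(\Omega_\delta)}$ and the difference factor by $\norm{u_{g_1}-u_{g_2}}_{L^\infty(\Omega_\delta)}$, and integrating over $[0,T]$ to produce the factor $T$. Collecting both summands gives \eqref{lemma:LipschitzContCalV:value}.

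I expect the \textbf{main obstacle} to be the bookkeeping in the control-difference split that pins down the constant $a$: one must correctly attribute the two contributions (namely $\norm{Df}$ from the drift and $\tfrac{|B|^2}{\beta}\norm{\nabla^2 g_2}$ from the feedback's Lipschitz constant) and then justify the passage from the pointwise differential inequality to the exponential bound by comparison with the scalar ODE rather than a careless application of integral Gronwall. The remaining steps—the triangle inequalities and the integration producing $c_T$ versus $T$—are routine, the only mild care being that all suprema are taken over $\Omega_\delta$, which is legitimate precisely because \Cref{hypo:stab} confines both trajectories to $\Omega_\delta$ on $[0,T]$.
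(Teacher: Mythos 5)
The paper does not actually print a proof of this lemma (the Appendix relegates it to ``standard techniques''), so your argument can only be judged on its own terms. Your overall strategy --- a Gronwall/comparison argument for the state estimate, followed by splitting the cost difference into a running-cost part and a control-energy part --- is surely the intended standard route, and the first half is correct: the split $u_{g_1}(y_1)-u_{g_2}(y_2)=(u_{g_1}-u_{g_2})(y_1)+(u_{g_2}(y_1)-u_{g_2}(y_2))$, the identification of $a$, and the comparison with $z'=az+b$ give exactly \eqref{lemma:LipschitzContCalV:state}; likewise your treatment of the $\ell$-term produces the $c_T$ summand correctly.

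The gap is in the quadratic term. With $p=u_{g_1}(y_1(t))$ and $q=u_{g_2}(y_2(t))$ the two controls are evaluated along \emph{different} trajectories, so neither $|p-q|\le\norm{u_{g_1}-u_{g_2}}_{L^\infty(\Omega_\delta;\R^m)}$ nor $|p+q|\le\norm{u_{g_1}+u_{g_2}}_{L^\infty(\Omega_\delta;\R^m)}$ is available; note that one line earlier you used precisely the opposite convention, correctly inserting $u_{g_2}(y_1)$ to separate the ``function difference'' from the ``evaluation-point difference''. Doing the same here,
\begin{equation*}
|u_{g_1}(y_1)|^2-|u_{g_2}(y_2)|^2=\big(|u_{g_1}(y_1)|^2-|u_{g_2}(y_1)|^2\big)+\big(|u_{g_2}(y_1)|^2-|u_{g_2}(y_2)|^2\big),
\end{equation*}
the first bracket is indeed bounded by $\norm{u_{g_1}+u_{g_2}}_{L^\infty}\norm{u_{g_1}-u_{g_2}}_{L^\infty}$, but the second is only controlled by $2\norm{u_{g_2}}_{L^\infty}\tfrac{|B|}{\beta}\norm{\nabla^2 g_2}_{L^\infty}\,|y_1(t)-y_2(t)|$; inserting \eqref{lemma:LipschitzContCalV:state} and integrating then produces an \emph{additional} summand of order $\beta\norm{u_{g_2}}_{L^\infty}\,c_T\,\norm{u_{g_1}-u_{g_2}}_{L^\infty}$ (using $\tfrac{|B|^2}{\beta}\norm{\nabla^2 g_2}\le a$), which grows like $c_T$ rather than $T$ and is not visibly dominated by the stated right-hand side. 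So either this extra term must be added to the constant in \eqref{lemma:LipschitzContCalV:value} --- which is harmless for every use of the lemma in the paper, since only the structure ``constant depending on $T$ and $g_2$ times $\norm{u_{g_1}-u_{g_2}}_{L^\infty}$'' is ever exploited --- or the control-energy difference needs a genuinely different estimate to recover the constant exactly as printed. As written, your one-line bound of the quadratic term is not valid. (A cosmetic point: your computation also drops the factor $|B|$ that the statement places in front of the $\tfrac{T\beta}{2}$ term.)
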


\begin{lemma}
    \label{lemma:Differemtiability}
    Let $T>0$ be a time horizon, an let  $y_0\in\omega$, $u\in C^{1}(\Omega)$,  and $\delta>0$ be  such that $(T,u,\delta,y_0)$ \Cref{hypo:stab} holds  for all $y_0\in\omega$. For arbitrary $y_0\in\omega$ let us define the adjoint state $p(\cdot;y_0,u)$ as the solution of the following adjoint equation
    \beq
-p'(t)+\nabla \ell(y(t))+\beta Du(y(t))^{\top}\cdot u(y(t))-(Df(y(t))+BDu(y(t)))^{\top}\cdot p(t)=0 
\label{def:adjoint:CalV:u}
\eeq
for all $t\in (0,T);\ p(T)=0$ ,with $y(t)=y(t;y_0,u)$. Then the function $\mathcal{V}_{T}$ its differentiable with respect to each of its variables and we have
\beq
D_{u}\mathcal{V}_{T}(y_0,u)\cdot\nu=\int_{0}^{T}(\beta u(y(t;y_0,u))-B^{\top}p(t;y_0,u) )^{\top}\cdot \nu(y(t;y_0,u))dt \label{def:mathcalVDu}\eeq
for all $\nu\in C^{1}(\Omega;\R^m)$, and
\beq
\nabla_{y} \mathcal{V}_{T}(y_0,u)=-p(0;y_0,u).
\label{def:mathcalVGrad}
\eeq
Further, for all $t\in (0,T]$ we have
\beq
\nabla_{y} \mathcal{V}_{T}(y(t),u)=-p(0;y(t),u)=-p(t;y_0,u).
\label{def:mathcalVGrad:2}
\eeq
\end{lemma}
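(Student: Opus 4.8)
The plan is to treat this as a standard first-order sensitivity (adjoint) calculation: first establish the Gateaux derivatives of $\mathcal V_T$ in the directions of $u$ and $y_0$ by differentiating the flow map, and then introduce the adjoint $p$ to eliminate the state sensitivities through an integration-by-parts identity.

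First I would fix $y_0\in\omega$ and $u\in C^1(\Omega;\R^m)$ with $(T,u,\delta,y_0)$ satisfying \Cref{hypo:stab}, and record the linearized state equations. For a control perturbation $u_\varepsilon=u+\varepsilon\nu$ with $\nu\in C^1(\Omega;\R^m)$, the sensitivity $z=\left.\partial_\varepsilon y(\cdot;y_0,u_\varepsilon)\right|_{\varepsilon=0}$ solves
\begin{equation*}
z'(t)=\big(Df(y(t))+B\,Du(y(t))\big)\,z(t)+B\,\nu(y(t)),\qquad z(0)=0,
\end{equation*}
while for an initial-condition perturbation $y_0+\varepsilon h$, $h\in\R^d$, the sensitivity $w=\left.\partial_\varepsilon y(\cdot;y_0+\varepsilon h,u)\right|_{\varepsilon=0}$ solves the same homogeneous system with $w(0)=h$ and no forcing. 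Existence and differentiability of these derivatives is the standard smooth-dependence theorem for ODEs; the only point requiring care is that the perturbed trajectories remain inside the region $\Omega_{\delta/2}$, on which $f$, $u$ and $\ell$ are $C^1$ with uniform bounds. This is exactly what \Cref{cor:Stab:c2}, together with the Lipschitz dependence in \Cref{lemma:LipschitzContCalV}, guarantees for $\varepsilon$ small, so the linearizations are legitimate and $\mathcal V_T$ is $C^1$ in both arguments near the base point.

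Differentiating $\mathcal V_T$ under the integral sign then gives, in the control direction,
\begin{equation*}
D_u\mathcal V_T(y_0,u)\cdot\nu=\int_0^T\Big(\nabla\ell(y)^\top z+\beta\,u(y)^\top Du(y)\,z+\beta\,u(y)^\top\nu(y)\Big)\,dt,
\end{equation*}
and in the initial-condition direction the same expression with $z$ replaced by $w$ and the last term dropped. To remove $z$ and $w$ I introduce the adjoint $p$ solving \eqref{def:adjoint:CalV:u} and compute $\frac{d}{dt}\big(p^\top z\big)$ and $\frac{d}{dt}\big(p^\top w\big)$; in both cases the terms containing $\big(Df+B\,Du\big)^\top p$ cancel against the sensitivity dynamics, leaving
\begin{equation*}
\frac{d}{dt}\big(p^\top z\big)=\nabla\ell(y)^\top z+\beta\,u(y)^\top Du(y)\,z+p^\top B\,\nu(y),
\end{equation*}
and the analogous identity for $w$ without the forcing term. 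Integrating over $[0,T]$ and using $p(T)=0$ with $z(0)=0$, respectively $w(0)=h$, yields $\int_0^T(\nabla\ell(y)^\top z+\beta u(y)^\top Du(y)z)\,dt=-\int_0^T(B^\top p)^\top\nu(y)\,dt$ and $\int_0^T(\nabla\ell(y)^\top w+\beta u(y)^\top Du(y)w)\,dt=-p(0)^\top h$. Substituting the first into the control-direction derivative produces the cancellation that gives \eqref{def:mathcalVDu}, and the second gives $\nabla_y\mathcal V_T(y_0,u)^\top h=-p(0)^\top h$ for all $h$, that is \eqref{def:mathcalVGrad}.

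For the time-shift identity \eqref{def:mathcalVGrad:2} I would exploit that the dynamics are autonomous, so the flow has the semigroup property $y(s;y(t),u)=y(t+s;y_0,u)$. The first equality is then simply \eqref{def:mathcalVGrad} applied with $y(t)$ in place of $y_0$. For the second equality I observe that the adjoint is the solution of a linear terminal-value problem driven by the trajectory: on the subinterval where the restarted trajectory $y(\cdot;y(t),u)$ and the original trajectory coincide, both adjoints satisfy the same linear ODE, so after reconciling the terminal conditions, uniqueness for linear ODEs forces the restarted adjoint $p(\cdot;y(t),u)$ to agree with the corresponding shift of $p(\cdot;y_0,u)$, whence $p(0;y(t),u)=p(t;y_0,u)$. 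The main obstacle in the whole argument is not the algebra, which is routine, but the rigorous justification of differentiability: one must show the trajectory map is genuinely Fr\'echet differentiable with the perturbed trajectories staying uniformly inside $\Omega_{\delta/2}$, which is precisely the content \Cref{cor:Stab:c2} and \Cref{lemma:LipschitzContCalV} provide; carefully matching the clock conventions in the terminal conditions for the time-shift identity is the only other delicate point.
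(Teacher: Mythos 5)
Your overall route---differentiate the flow, introduce the adjoint, and eliminate the sensitivities via $\frac{d}{dt}(p^\top z)$ and $\frac{d}{dt}(p^\top w)$---is exactly the ``standard technique'' the paper alludes to when it omits the proof in the Appendix, and your derivations of \eqref{def:mathcalVDu} and \eqref{def:mathcalVGrad} are correct: the sensitivity systems, the cancellation of the $(Df+B\,Du)^\top p$ terms, and the boundary terms $p(T)=0$, $z(0)=0$, $w(0)=h$ all check out. One minor caveat: \Cref{cor:Stab:c2} is stated for feedbacks of the gradient form $u_g=-\beta^{-1}B^\top\nabla g$ with $g\in C^2(\Omega)$, while your perturbation $u+\varepsilon\nu$ is a general $C^1$ feedback; the underlying Gronwall argument of \Cref{lemma:LipschitzContCalV} does cover this case, but you should rerun it rather than cite the corollary verbatim.

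The genuine gap is the second equality of \eqref{def:mathcalVGrad:2}, and it sits precisely where you wrote ``after reconciling the terminal conditions'': they cannot be reconciled. The restarted adjoint $p(\cdot;y(t),u)$ vanishes at time $T$, whereas the shift $q(s):=p(t+s;y_0,u)$ vanishes at time $T-t$; both solve the same \emph{inhomogeneous} linear ODE on $[0,T-t]$, so uniqueness for linear terminal-value problems forces them to \emph{differ} unless the forcing $\nabla\ell+\beta\,Du^\top u$ effectively vanishes along the trajectory on $[T-t,T]$. Concretely, take $d=m=1$, $f=0$, $B=1$, $\ell(y)=\tfrac12 y^2$, $u(y)=-ky$ with $k>0$ (a feedback even of gradient form); then $y(s)=y_0e^{-ks}$, $p(s;y_0,u)=\tfrac{(1+\beta k^2)y_0}{2k}\bigl(e^{ks-2kT}-e^{-ks}\bigr)$, and
\begin{equation*}
p(t;y_0,u)-p(0;y(t),u)=\frac{(1+\beta k^2)y_0}{2k}\,e^{-2kT}\bigl(e^{kt}-e^{-kt}\bigr)\neq 0 \quad\text{for } t\in(0,T],\ y_0\neq 0 .
\end{equation*}
So no argument can close this step, because the identity as printed is false for finite $T$. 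What your restart/semigroup argument \emph{does} prove---by running the adjoint identity on $[t,T]$, where $q$ and $p(\cdot;y_0,u)$ genuinely coincide (same ODE, same terminal condition at the same time $T$)---is $p(t;y_0,u)=-\nabla_y\mathcal{V}_{T-t}(y(t),u)$: the gradient of the cost-to-go over the \emph{remaining} horizon $T-t$, not over a fresh horizon of length $T$ started at $y(t)$. This corrected form also avoids the hidden need to continue the trajectory past time $T$ (the stated hypotheses only guarantee \Cref{hypo:stab} from points of $\omega$ up to time $T$). The discrepancy is $O(e^{-2kT})$ in the example and disappears as $T\to\infty$, which is the regime in which the paper actually invokes \eqref{def:mathcalVGrad:2} in Remarks \ref{rem:conn1} and \ref{rem:conn2}; an honest write-up should either state the identity with $\mathcal{V}_{T-t}$ or flag it as an asymptotic statement.
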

\begin{rem}
{\em Under the hypotheses of the previous lemma and assuming $u=u_\phi$ the adjoint equation can be expressed in terms of $\phi$ as
\beq
-p'(t)+\nabla \ell(y(t))-Df^{\top}(y(t)) p(t)+\frac{1}{\beta}\nabla^{2}\phi(y(t))BB^{\top}(\nabla \phi(y(t))+p(t))=0 
\label{def:adjoint:CalV:v}
\eeq
for all $t\in (0,T);\ p(T)=0$. Additionally, using \eqref{def:mathcalVGrad:2} we can express the derivative of $\mathcal{V}_{T}(y_0,u_\phi)$ with respect to $\phi$ as
\begin{equation*}
\begin{array}l
D_{\phi}\left(\mathcal{V}_{T}(y_0,u_\phi)\right)\cdot\psi\\[1.5ex]
\quad =\frac{1}{\beta}\int_{0}^{T}(\nabla \phi(y(t;y_0,u_\phi))- \nabla_{y} \mathcal{V}_T(y(t;y_0,u_\phi)))^{\top} BB^{\top} \nabla \psi(y(t;y_0,u_\phi))\,dt \label{def:mathcalVphi}
\end{array}
\end{equation*}

for $\psi\in C^{2}(\Omega)$.
}
\end{rem}

\bibliographystyle{siamplain}
\bibliography{references}
\end{document}